\documentclass[11pt,reqno]{amsart}
\usepackage{amssymb,array,mathdots,setspace,tikz}
\usepackage[
	colorlinks=true,
	citecolor=blue,
	linkcolor=blue,
	urlcolor=blue]{hyperref}
\usepackage{setspace}\onehalfspacing
\usepackage[margin=1.2in]{geometry}
\usetikzlibrary{matrix}

\newcommand{\CC}{\mybb{C}}
\newcommand{\e}{\widetilde{e}}
\newcommand{\f}{\widetilde{f}}
\newcommand{\floor}[1]{\left\lfloor { #1 } \right\rfloor}
\newcommand{\g}{\mathfrak{g}}
\newcommand{\hM}{\widehat{\mathcal{M}}}
\newcommand{\MM}{\mathcal{M}}
\newcommand{\mybb}[1]{\mathbf{#1}}
\newcommand{\one}{\boldsymbol{1}}

\newcommand{\sage}{\textsc{SageMath}}
\newcommand{\wt}{{\rm wt}}
\newcommand{\ZZ}{\mybb{Z}}

\definecolor{darkred}{rgb}{0.7,0,0}
\newcommand{\defn}[1]{{\color{darkred}\emph{#1}}}

\theoremstyle{plain}
\newtheorem{thm}{Theorem}[section]
\newtheorem{lemma}[thm]{Lemma}

\newtheorem{prob}[thm]{Problem}
\newtheorem{prop}[thm]{Proposition}

\theoremstyle{definition}
\newtheorem{dfn}[thm]{Definition}
\newtheorem{ex}[thm]{Example}
\newtheorem{remark}[thm]{Remark}
\numberwithin{equation}{section}
\numberwithin{figure}{section}
\numberwithin{table}{section}

\newenvironment{acknowledgements}
{\bigskip\noindent\footnotesize\textbf{Acknowldegements.} }
{\medskip}

\newcommand{\GYW}[1]{
 \foreach \x [count=\s from 0] in {#1}{
   \foreach \y [count=\t from 0] in \x {
     \node[font=\scriptsize] at (-\t,\s) {$\y$};
     \draw (-\t+.5,\s+.5) to (-\t-.5,\s+.5);
     \draw (-\t+.5,\s-.5) to (-\t-.5,\s-.5);
     \draw (-\t-.5,\s-.5) to (-\t-.5,\s+.5);
   }
 \draw[-,thick] (.5,\s+1) to (.5,-.5) to (-\t-1,-.5);
 }
}

\newcommand{\wall}[1]{
        \begin{tikzpicture}[baseline=20,scale=.45]
                \GYW{#1}
        \end{tikzpicture}
}

\usepackage{listings}
\lstset{backgroundcolor=\color{blue!5}}
\lstdefinelanguage{Sage}[]{Python}
{morekeywords={False,True},sensitive=true}
\lstset{
  emph={sage},
  emphstyle=\color{blue},
  emph={[2]self},
  emphstyle={[2]\color{brown}},
  frame=none,
  showtabs=False,
  showspaces=False,
  showstringspaces=False,
  commentstyle={\ttfamily\color{olive}},
  keywordstyle={\ttfamily\color{purple}\bfseries},
  stringstyle={\ttfamily\color{orange}\bfseries},
  language=Sage,
  basicstyle={\footnotesize\ttfamily\singlespacing},
  aboveskip=0.0em,
  belowskip=.1in,
  xleftmargin=.1in,
  xrightmargin=.1in,
}

\begin{document}
\title{The weight function for monomial crystals of affine type}
\author{Luke James}
\author{Ben Salisbury}
\address{Department of Mathematics, Central Michigan University, Mt. Pleasant, MI 48859}
\email[Luke James]{james4la@cmich.edu}
\email[Ben Salisbury]{salis1bt@cmich.edu}
\urladdr[Ben Salisbury]{http://people.cst.cmich.edu/salis1bt}
\thanks{B.S.\ was partially supported by CMU Early Career grant \#C62847 and Simons Foundation grant \#429950.}
\keywords{affine root system, crystal, Nakajima monomial}
\subjclass[2010]{}

\maketitle

\begin{abstract}
In this work, an expression for the affine weight is calculated for Nakajima monomial crystals in affine types $A_n^{(1)}$ and $B_3^{(1)}$.
\end{abstract}

\section{Introduction}

In 2003, both Nakajima \cite{Nak:03} and Kashiwara \cite{Kash:03} defined a crystal structure on a certain set of monomials $\MM$, that have since been referred to as Nakajima monomials.  Using this crystal structure, it was shown that irreducible highest weight crystals can be modeled using Nakajima monomials \cite{Kash:03}.  Specifically, the irreducible highest weight crystal $B(\lambda)$ is isomorphic to the connected component of the crystal graph of all Nakajima monomials containing a highest weight monomial of weight $\lambda$.  Later, Kang--Kim--Shin \cite{KKS:07} modified the crystal structure of Nakajima monomials from \cite{Kash:03} and dubbed the modification the set of modified Nakajima monomials, denoted $\hM$.  It was shown that the connected component of the crystal graph of all modified Nakajima monomials containing the element $\one \in \hM$ is isomorphic to the crystal $B(\infty)$.

Given a modified Nakajima monomial $M = \prod_{i\in I}\prod_{k\ge 0} Y_{i,k}^{y_{i,k}}\one$ (see Section \ref{subsec:monomials} for an explanation of the notation), the weight of $M$ is defined to be 
\[
\wt(M) = \sum_{i\in I} \biggl( \sum_{k\ge0} y_{i,k} \biggr) \Lambda_i,
\]
where $\Lambda_i$ is the $i$th fundamental weight of the underlying Kac--Moody algebra $\g$.  If $\g$ is of finite type, then this description of the weight map is complete.  However, if $\g$ is of affine type (for now, suppose $\g$ is not of type $A_{2n}^{(2)}$), then the affine weight lattice of $\g$ has the form
\[
P = \ZZ \Lambda_0 \oplus \ZZ\Lambda_1 \oplus \cdots \oplus \ZZ \Lambda_n \oplus \ZZ\delta,
\]
where $\delta$ is the minimal imaginary root in the root system of $\g$.  Therefore, the weight map on the monomial model for crystals of affine type should include a term involving $\delta$.  Indeed, there should be some $\ZZ$-valued function $D$ on monomials such that 
\[
\wt(M) =  \sum_{i\in I} \biggl( \sum_{k\ge0} y_{i,k} \biggr) \Lambda_i + D(M)\delta.
\]
The definition in \cite{KKS:07} of the weight function, however, implies that the crystals constructed there are $U_q'(\g) = U_q([\g,\g])$ crystals when $\g$ is affine type, since, in this case, the weight lattice is $P_{\mathrm{cl}} = P/\ZZ\delta$.

By the structure of the root system of an (untwisted) affine Kac--Moody algebra $\g$, the minimal imaginary root satisfies the identity $\delta = \theta + \alpha_0$, where $\theta$ is the unique highest root of the underlying finite type root system of $\g$.  Moreover, by the crystal axioms, $\wt(\f_iM) = \wt(M) - \alpha_i$ for all elements $M$ in the crystal and all $i\in I$.  Therefore, the coefficient of $\delta$ in $\wt(M)$ will decrease by $1$ for each instance of a $0$-arrow in the path from $\one$ to $M$ in the crystal graph, so the function $D$ is obtained by counting the number of $0$-arrows in a path from $\one$ to $M$.  This is the approach taken in the results of this work.

The main results of this work give a description of the function $D$ in types $A_n^{(1)}$ ($n\ge 1$) and $B_3^{(1)}$.  The outline of the paper is as follows.  In Section \ref{sec:crystals}, we give an overview of the theory of abstract crystals and introduce both the modified Nakajima monomial model and the model given by generalized Young walls.  (Generalized Young walls will be important for the proof of the main result in type $A_n^{(1)}$ for $n\ge 2$.)  In Section \ref{sec:typeA}, the main result in type $A_n^{(1)}$ is given, but it is split into two cases: $n=1$ and $n\ge 2$.  The main result for type $B_3^{(1)}$ is given in Section \ref{sec:typeB} and some discussion is given on generalizing the result to types with $n\ge 3$.  Finally, Section \ref{sec:hw} explains how the main results may be applied to irreducible highest weight crystals modeled by Nakajima monomials.

\section{Crystals}\label{sec:crystals}

\subsection{Generalities on affine crystals}

Let $I = \{0,1,\dots,n\}$ be an index set, and let $(C,\Pi,\Pi^\vee,P,P^\vee)$ be a Cartan datum of affine type; i.e.,
\begin{itemize}
\item $C = (C_{ij})_{i,j\in I}$ is a generalized Cartan matrix of affine type,
\item $\Pi = \{\alpha_i:i\in I\}$ is the set of simple roots,
\item $\Pi^\vee = \{h_i:i\in I\}$ is the set of simple coroots,
\item $P^\vee = \ZZ h_0 \oplus\cdots\oplus \ZZ h_n \oplus \ZZ d$ is the dual weight lattice,
\item $\mathfrak{h} = \CC\otimes_\ZZ P^\vee$ is the Cartan subalgebra,
\item and $P = \{ \lambda\in\mathfrak{h}^* : \lambda(P^\vee) \subset \ZZ \}$ is the weight lattice.
\end{itemize}
The simple roots and simple coroots are related via the Cartan matrix: $\alpha_j(h_i) = C_{ij}$.  The fundamental weights $\Lambda_i \in P$ are defined as $\Lambda_i(h_j) = \delta_{i,j}$ and $\Lambda_i(d) = 0$.  Define $P^+ = \{ \lambda \in P : \lambda(h_i) \ge 0 \text{ for all } i\in I\}$ to be the set of dominant integral weights.  Finally, the canonical pairing $\langle\ ,\ \rangle\colon P^\vee \times P \longrightarrow \ZZ$  is defined by $\langle h, \lambda \rangle = \lambda(h)$ for all $h\in P^\vee$ and $\lambda \in P$.

Let $\mathfrak{g}$ be the affine Kac-Moody algebra associated with this Cartan datum, and denote by $U_q(\mathfrak{g})$ the quantized universal enveloping algebra of $\mathfrak{g}$.  We will always assume that $\g$ is of untwisted affine type; that is, one of the types from \cite[Table Aff 1]{Kac:90}.
We denote the generators of $U_q(\mathfrak{g})$ by $e_i$, $f_i$ ($i\in I$), and $q^h$ ($h\in P^\vee)$.  The subalgebra of $U_q(\mathfrak{g})$ generated by $f_i$ ($i\in I$) will be denoted by $U_q^-(\mathfrak{g})$.  Also, let $U_q'(\g)$ be the subalgebra of $U_q(\g)$ generated by $e_i$, $f_i$, and $K_i^{\pm1}$ ($i\in I$), where $K_i = q^{s_ih_i}$ and $S = \mathrm{diag}(s_i \in \ZZ_{>0} : i\in I)$ is a diagonal matrix such that $SC$ is a symmetric matrix.  For more information, see, for example, \cite{HK:02}. 

The null root, or minimal imaginary root, of the root system of $\g$ is defined to be 
\[
\delta = d_0\alpha_0 + d_1\alpha_1 + \cdots + d_n\alpha_n,
\]
where the integers $\{d_i : i \in I \}$ are given in \cite[Table Aff 1]{Kac:90}.  Since we are only considering untwisted affine types, we have $d_0 = 1$.  Moreover, by \cite[Prop.~6.4]{Kac:90}, $\delta = \alpha_0 + \theta$, where $\theta$ is the highest root of the underlying finite-type Lie algebra $\g_0$ of $\g$.
Using the null root, the weight lattice $P$ may be expressed as 
\[
P = \ZZ \Lambda_0 \oplus \ZZ\Lambda_1 \oplus \cdots \oplus \ZZ \Lambda_n \oplus \ZZ\delta.
\]
In terms of the simple roots, we have
\[
\delta = 
\begin{cases}
\alpha_0 + \alpha_1 + \cdots + \alpha_n & \text{ if } \g = A_n^{(1)}, \\
\alpha_0 + \alpha_1 + 2\alpha_2 + \cdots + 2\alpha_n & \text{ if } \g = B_n^{(1)}.
\end{cases}
\]
See \cite{Kac:90} for the expression of $\delta$ in terms of the simple roots in other affine types.

\begin{dfn}
An \defn{abstract $U_q(\g)$-crystal} associated to the affine quantum group $U_q(\g)$ is a set $B$ together with the maps 
\[
\wt\colon B \longrightarrow P, \ \ \ \ 
\varepsilon_i,\varphi_i\colon B \longrightarrow \ZZ \sqcup \{-\infty\}, \ \ \ \ 
\e_i, \f_i \colon B \longrightarrow B \sqcup \{0\}, \ (i \in I)
\] 
such that for all $i \in I$ and $b \in B$,
\begin{enumerate}
\item $\varphi_i(b) = \varepsilon_i(b) + \langle h_i, \wt(b)\rangle$,
\item $\wt(\e_i b) = \wt(b) + \alpha_i$, $\wt (\f_i b) = \wt(b)- \alpha_i$,
\item $\varepsilon_i(\e_i b) = \varepsilon_i(b) - 1$, $\varphi(\e_i b) = \varphi_i(b) + 1$,
\item $\varepsilon_i(\f_i b) = \varepsilon_i(b) + 1$, $\varphi_i(\f_i b) = \varphi_i(b) -1$,
\item\label{item:graph} $\f_i b = b'$ if and only if $\e_i b' = b$ for $b,b' \in B$,
\item $\e_i b = \f_i b = 0$ if $ \varepsilon_i(b) = -\infty$.
\end{enumerate}
\end{dfn}

The operators $\e_i$ and $\f_i$ above are known as the \defn{Kashiwara operators}. Note that condition (\ref{item:graph}) implies $B$ is equipped with an an $I$-colored directed graph structure given by $b \overset{i}{\longrightarrow} b'$ if and only if $\f_i b=b'$. 
This graph is called the \defn{crystal graph} of $B$. 

\begin{dfn} 
A \defn{crystal morphism} $\psi\colon B_1 \longrightarrow B_2$ is a map $\psi\colon B_1\sqcup \{0\} \longrightarrow B_2 \sqcup \{0\}$ satisfying the following conditions:
\begin{enumerate}
\item $\psi(0) = 0$,
\item $\wt (\psi(b)) = \wt(b)$, $\varepsilon_i(\psi(b)) = \varepsilon_i(b)$, and $ \varphi_i(\psi(b)) = \varphi_i(b)$ for all $b \in B_1$ such that $\psi(b)\neq 0$,
\item $\f_i \psi(b) = \psi(b')$ if $b,b' \in B_1$ and $\f_i b = b'$.
\end{enumerate}
\end{dfn}

An \defn{isomorphism} of crystals is defined as a bijective morphism of crystals such that $\psi(\f_ib) = \f_i\psi(b)$ for all $b\in B_1$ and $i\in I$.  A morphism $\psi\colon B_1 \longrightarrow B_2$ is said to be an \defn{embedding} if $\psi$ induces an injective map $B_1 \sqcup \{0\} \longrightarrow B_2 \sqcup\{0\}$.  Moreover, an embedding $\psi$ is called \defn{full} if, for all $b\in B_1$ such that $\e_i\psi(b) \in B_2$, $\e_ib \in B_1$.

\begin{ex}
For each $\lambda \in P^+$, the crystal basis $B(\lambda)$, as defined in \cite{Kash:91}, of the irreducible highest weight $U_q(\g)$-module $V(\lambda)$ is an abstract $U_q(\g)$-crystal.  The unique element of $B(\lambda)$ of weight $\lambda$ is denoted by $u_\lambda$.
\end{ex}

\begin{ex}
The crystal basis $B(\infty)$, as defined in \cite{Kash:91}, of the negative half of the quantum group $U_q^-(\g)$ is an abstract $U_q(\g)$-crystal.  The unique element of $B(\infty)$ of weight $0$ is denoted by $u_\infty$.
\end{ex}

\begin{ex}
Let $\lambda \in P^+$.  Define $T_\lambda = \{t_\lambda\}$ to be the one-element abstract $U_q(\g)$-crystal whose operations, for all $i\in I$, are defined as
\[
\e_it_\lambda = \f_it_\lambda = 0, \ \ \ \ 
\varepsilon_i(t_\lambda) = \varphi_i(t_\lambda) = -\infty, \ \ \ \
\wt(t_\lambda) = \lambda.
\]
By \cite{Kash:93}, there is a full crystal embedding $B(\lambda) \lhook\joinrel\longrightarrow B(\infty) \otimes T_\lambda$, where $\otimes$ denotes the crystal tensor product defined in \cite{Kash:91}.  We will not require the general definition of the crystal tensor product, but rather focus on tensor products of the form $B\otimes T_\lambda$, where $B$ is some abstract crystal.  In this case, the crystal graphs $B\otimes T_\lambda$ and $B$ are naturally isomorphic as $I$-colored directed graphs using the map $b \otimes t_\lambda \mapsto b$, for all $b\in B$, but the weights of corresponding vertices differ by $\lambda$; that is, for $b\in B$, $\wt(b \otimes t_\lambda) = \wt(b) + \lambda$.
\end{ex}

\subsection{Modified Nakajima monomials}\label{subsec:monomials}

Let $Y_{i,k} \ (i \in I, k \in \ZZ)$ be formal commuting variables with an additional commuting variable $\one$. Define the \defn{modified Nakajima monomials} as the set $\hM$ of all monomials of the form
\[
M = \prod_{i \in I} \prod_{k \geq 0} Y_{i,k}^{y_{i,k}} \one,
\]
where $y_{i,k} \in \ZZ$ and $y_{i,k} = 0$ for all but finitely many $k$. For such an $M$, define
\begin{subequations}\label{eq:mon_cry_ops}
\begin{align} 
\wt(M) &= \sum_{i \in I}\biggl(\sum_{k \geq 0} y_{i,k}\biggr) \Lambda_i,\\
\varphi_i(M) &= \max \left\{ \sum_{j=0}^k y_{i,j} : k \in \ZZ_{\geq 0}\right\},\\
\varepsilon_i(M) &= \varphi(M) - \langle h_i, \wt(M) \rangle, \label{eq:oldep}\\
k_f = k_f(M) &= \min\left\{k \in \ZZ_{\geq 0} : \varphi_i(M) = \sum_{j=0}^k y_{i,j}\right\},\\
k_e = k_e(M) &= \max \left\{ k \in\ZZ_{\geq 0} : \varphi_i(M) = \sum_{j=0}^k y_{i,j}\right\}.
\end{align}
\end{subequations}
Next, choose a set of nonnegative integers $(o_{i,j})_{i \neq j}$ such that $o_{i,j} + o_{j,i} = 1$. Define
\[
A_{i,k}= Y_{i,k}Y_{i,k+1} \prod_{j \neq i}Y_{j,k+o_{j,i}}^{C_{ji}}.
\]
Then the Kashiwara operators can be defined as
\begin{equation}\label{eq:infop}
\e_i M = \begin{cases} 0 & \text{ if } \varepsilon_i(M) = 0, \\
A_{i,k_e}M & \text{ if } \varepsilon_i(M) >0, \end{cases} 
\ \ \ \ \ \ \ \ \
\f_i M = A_{i,k_f}^{-1} M.
\end{equation}
In finite types, the set $\hM$ together with the maps $\wt, \varepsilon_i, \varphi_i, \e_i, \f_i \ ( i \in I)$ forms an abstract $U_q(\g)$-crystal \cite{KKS:07}.  However, in affine types, $\hM$ is only an abstract $U_q'(\g)$-crystal.  

\begin{remark}
In this paper, we will work only in types $A_n^{(1)}$ $(n\ge 1)$ and $B_n^{(1)}$.  Once and for all, we fix our choice of integers $(o_{i,j})_{i\neq j}$ for each type: set
\[
o_{i,j} = \begin{cases} 1 & i < j, \\ 0 & i > j. \end{cases}
\]
Note, however, that $I$ is identified with $\ZZ/(n+1)\ZZ$ in type $A_n^{(1)}$, so in this case we also assert $o_{0,n} = 0$ and $o_{n,0} = 1$.  This is the convention used in \cite{KKS:07}.
\end{remark}

Define $\MM(\infty)$ to be the connected component of $\hM$ (under the application of the Kashiwara operators) containing $\one$.

\begin{thm}[\cite{KKS:07}]
The morphism $B(\infty) \longrightarrow \MM(\infty)$ given by $u_{\infty} \mapsto \one$ is a $U_q(\g)$-crystal isomorphism when $\g$ is of finite type and is a $U_q'(\g)$-crystal isomorphism when $\g$ is of affine type. 
\end{thm}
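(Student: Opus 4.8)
The plan is to identify $\MM(\infty)$ with $B(\infty)$ by means of Kashiwara's recognition theorem for $B(\infty)$ (see, e.g., \cite{HK:02}), which certifies an abstract crystal $B$ with a distinguished element $b_0$ to be isomorphic to $B(\infty)$ once one produces, for every $i\in I$, a strict crystal embedding of $B$ into the tensor product of $B$ with the elementary crystal $B_i=\{b_i(m):m\in\ZZ\}$, subject to a normalization and a connectivity hypothesis. Since the paragraph preceding the statement already records (citing \cite{KKS:07}) that $\hM$, and hence its connected component $\MM(\infty)$, is an abstract crystal of the appropriate type, the first step is simply to pin down $\one$ as the correct highest weight vector: from \eqref{eq:mon_cry_ops} one reads off $\wt(\one)=0$ and $\varphi_i(\one)=\varepsilon_i(\one)=0$ for all $i$, whence $\e_i\one=0$ by \eqref{eq:infop}.

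Next I would construct the strict embeddings $\Psi_i\colon\MM(\infty)\longrightarrow\MM(\infty)\otimes B_i$. These are Kashiwara's embeddings: each $\Psi_i$ records in its $B_i$-factor the length of the relevant $i$-string through a monomial $M$ and moves $M$ to the appropriate end of that string, so that $\Psi_i(\one)=\one\otimes b_i(0)$, the image lies in $\MM(\infty)\otimes\{b_i(-m):m\geq0\}$, and injectivity is automatic. The substance of this step is verifying that $\Psi_i$ is \emph{strict}, i.e.\ that it intertwines every operator $\e_j$ and $\f_j$ with the corresponding operator on the tensor product. For $j=i$ this reduces to the tensor product rule together with the $\mathfrak{sl}_2$-string structure of a single color; for $j\neq i$ one must check compatibility with multiplication by $A_{j,k}^{\pm1}$. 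This is precisely where the explicit shape $A_{i,k}=Y_{i,k}Y_{i,k+1}\prod_{j\neq i}Y_{j,k+o_{j,i}}^{C_{ji}}$ and the fixed choice of $(o_{i,j})$ enter, since the off-diagonal factors are what couple color $i$ to color $j$ and govern how $\f_j$ shifts the quantities $\varepsilon_i$, $k_e$, and $k_f$ of \eqref{eq:mon_cry_ops}. I expect this $j\neq i$ compatibility to be the main obstacle, as it requires careful bookkeeping of the coupled variables; an alternative that sidesteps some of it is to invoke the monomial realization of the highest weight crystals $B(\lambda)$ from \cite{Kash:03,Nak:03} together with the embedding $B(\lambda)\lhook\joinrel\longrightarrow B(\infty)\otimes T_\lambda$ and then pass to the limit in $\lambda$.

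Finally, the connectivity hypothesis of the recognition theorem requires that the only element of $\MM(\infty)$ annihilated by all the $\e_i$ be $\one$; since $\MM(\infty)$ is by definition connected, this follows once one checks that $\varepsilon_i(M)=0$ for every $i$ forces $M=\one$, a short computation with \eqref{eq:mon_cry_ops}. The theorem then yields an isomorphism of crystals $\MM(\infty)\cong B(\infty)$ carrying $\one$ to $u_\infty$, which is exactly the asserted morphism. It remains only to interpret this at the level of weights. In finite type the weight map of \eqref{eq:mon_cry_ops} takes values in $\bigoplus_i\ZZ\Lambda_i=P$, so the isomorphism is weight-preserving and is therefore a $U_q(\g)$-crystal isomorphism. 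In affine type, by contrast, $\wt(M)$ carries no $\delta$-component and so lands in $\bigoplus_i\ZZ\Lambda_i\cong P/\ZZ\delta=P_{\mathrm{cl}}$; as $P_{\mathrm{cl}}$ is the weight lattice recorded by $U_q'(\g)$-crystals, the map is a $U_q'(\g)$-crystal isomorphism but not a $U_q(\g)$-crystal isomorphism—which is exactly the discrepancy that the function $D$ of this paper is introduced to repair.
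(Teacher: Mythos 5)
The first thing to say is that the paper contains no proof of this statement: it is imported verbatim from \cite{KKS:07}, so the only meaningful comparison is with the proof given there. At the level of strategy your plan does match that proof: Kang--Kim--Shin verify the Kashiwara--Saito characterization of $B(\infty)$ for $\MM(\infty)$ by producing, for each $i$, a strict embedding $\Psi_i\colon \MM(\infty)\longrightarrow \MM(\infty)\otimes B_i$, and your normalization of $\one$ (that $\wt(\one)=0$, $\varepsilon_i(\one)=\varphi_i(\one)=0$, $\e_i\one=0$) and your closing finite/affine dichotomy --- the weight map of \eqref{eq:mon_cry_ops} lands in $\bigoplus_i\ZZ\Lambda_i$, which equals $P$ in finite type but only $P_{\mathrm{cl}}=P/\ZZ\delta$ in affine type, whence $U_q'(\g)$-crystals --- are both correct and consistent with the paper's surrounding discussion.

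However, there are two genuine gaps in the middle of your sketch. First, your description of $\Psi_i$ as recording ``the length of the relevant $i$-string'' and moving $M$ to the end of that string is not the Kashiwara embedding, and the map it suggests, $M\mapsto \e_i^{\max}M\otimes \f_i^{\,\varepsilon_i(M)}b_i$, fails to be a crystal morphism already for the color $i$ itself: take $b=\f_j\one$ with $C_{ij}<0$; then $\varepsilon_i(b)=0$ and $\varphi_i(b)=-C_{ij}>0$, so the tensor product rule gives $\f_i(b\otimes b_i(0))=\f_ib\otimes b_i(0)$, whereas the proposed map sends $\f_ib$ to $b\otimes b_i(-1)$. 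The correct embedding is governed by the star-crystal data $\varepsilon_i^*$ and $(\e_i^*)^{\max}$, which are not functions of the ordinary $i$-string position, and realizing them on monomials is the actual substance of the argument in \cite{KKS:07}, not a routine $\mathfrak{sl}_2$ check; your alternative route through the $B(\lambda)$ realizations and a limit in $\lambda$ likewise requires a compatibility argument you have not supplied. Second, your verification of the final hypothesis is circular as stated: in the ambient crystal $\hM$ there are many monomials killed by every $\e_i$ --- for instance $Y_{i,0}\one$ has $\varepsilon_j(Y_{i,0}\one)=0$ for all $j$ --- so no ``short computation with \eqref{eq:mon_cry_ops}'' can show that $\varepsilon_i(M)=0$ for all $i$ forces $M=\one$. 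That uniqueness holds only inside the connected component $\MM(\infty)$, and establishing it (together with $\wt(\MM(\infty))\subset Q_-$ and the condition that for $M\neq\one$ some $\Psi_i(M)$ has a nontrivial $B_i$-component) is precisely part of what the embeddings must be used to prove; connectedness of $\MM(\infty)$ by itself does not deliver it.
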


\begin{ex}\label{ex:A11-1}
Let $\MM(\infty)$ be of type $A_1^{(1)}$ and set $M = Y_{0,0}^{-1}Y_{0,1}Y_{1,1}Y_{1,2}^{-1}\one$.  Then, by the definition of the weight function on $\MM(\infty)$ from \cite{KKS:07} above we have $\wt(M) = 0$.  However, using the crystal axioms, since $M = \f_1\f_0\one$, the weight should be $\wt(M) = - \alpha_0 - \alpha_1 = -\delta$.  Note that if we use the expression of elements in $\MM(\infty)$ in terms of the variables $A_{i,k}$, then $M = A_{1,1}^{-1}A_{0,0}^{-1}\one$, from which it is clear an application of $\f_0$ has occurred in the path from $\one$ to $M$.
\end{ex}

\begin{remark}
In the last example, we used the facts that $\wt(\f_ib) = \wt(b) - \alpha_i$ for all $i \in I$ and that $\alpha_0 = \delta-\theta$, where $\theta$ is the highest root of the classical underlying root system of $\g$ (since $\g$ is assumed to be of untwisted affine type).  In particular,
\[
\wt(\f_0b) = \wt(b) - \alpha_0 = \wt(b) + \theta - \delta.
\]
\end{remark}

The goal of this paper is to obtain an expression for the coefficient of $\delta$ in the weight function solely in terms of the variables $Y_{i,k}$.

\begin{prob}\label{prob:main}
For $\MM(\infty)$ with $\g$ of affine type, determine a function $D\colon \MM(\infty) \longrightarrow \ZZ$ such that the weight function $\wt\colon \MM(\infty) \longrightarrow P$ is defined by
\[
\wt(M) = \sum_{i\in I} \Bigl( \sum_{k\ge 0} y_{i,k} \Bigr) \Lambda_i + D(M)\delta,
\]
where $M = \prod_{i\in I} \prod_{k\ge 0} Y_{i_k}^{y_{i,k}}$.
\end{prob}


Henceforth, assume $\g$ is of type $A_n^{(1)}$ or $B_n^{(1)}$.  Since each of $\theta$ and $\alpha_i$ $(i \in I \setminus \{0\})$ can be expressed as an element of $\ZZ \Lambda_0 \oplus \cdots \oplus \ZZ \Lambda_n$, it must be that, for $M = \f_{b_1} \f_{b_2} \cdots \f_{b_\ell} \one = \prod_{i\in I}\prod_{k \geq 0} Y_{i,k}^{y_{i,k}}\one \in \MM(\infty)$, 
\[
\wt(M) = \sum_{i \in I}\Big(\sum\limits_{k \geq 0} y_{i,k}\Big) \Lambda_i + \Bigl|\{1 \le j \le \ell : b_j = 0\}\Bigr| \delta.
\]
Note that this implies that, whenever $M$ can be expressed uniquely as $\prod_{i \in I}\prod_{k \geq 0} A_{i,k}^{a_{i,k}}\one$, the coefficient of $\delta$ in the weight of $M$ is exactly $\sum_{k \geq 0} a_{0,k}$.  Therefore, to complete the weight function for affine crystals, it suffices to calculate the number of $0$-arrows applied from $\one$ to reach $M$ in the crystal graph.

A solution to Problem \ref{prob:main} will be given in Section \ref{sec:typeA} for type $A_n^{(1)}$ and in Section \ref{sec:typeB} for type $B_3^{(1)}$.


\subsection{Generalized Young Walls}

Let $\mathcal B$ be a board with coloring as follows:
\[
\begin{tikzpicture}[baseline=50,font=\footnotesize,scale=.7]
 \draw (-6,0) to (-6,6.25);
 \draw (-5,0) to (-5,6.25);
 \draw (-4,0) to (-4,6.25);
 \draw (-3,0) to (-3,6.25);
 \draw (-2,0) to (-2,6.25);
 \draw (-1,0) to (-1,6.25);
 \draw[very thick] (-7,0) to ( 0,0) to ( 0,6.25);
 \draw (-7,1) to (0,1);
 \draw (-7,2) to (0,2);
 \draw (-7,3) to (0,3);
 \draw (-7,4) to (0,4);
 \draw (-7,5) to (0,5);
 \draw (-7,6) to (0,6);
 \draw[fill=white,color=white] (-6.5,2.25) rectangle (.5,2.75);
 \node at (-.5,5.5) {$1$};
 \node at (-.5,4.5) {$0$};
 \node at (-.5,3.5) {$n$};
 \node at (-.5,2.65) {$\vdots$};
 \node at (-.5,1.5) {$1$};
 \node at (-.5,0.5) {$0$};
 \node at (-1.5,5.5) {$0$};
 \node at (-1.5,4.5) {$n$};
 \node at (-1.5,3.5) {\tiny $n$$-$$1$};
 \node at (-1.5,2.65) {$\vdots$};
 \node at (-1.5,1.5) {$0$};
 \node at (-1.5,0.5) {$n$};
 \node at (-2.5,0.5) {$\cdots$};
 \node at (-2.5,1.5) {$\cdots$};
 \node at (-2.5,3.5) {$\cdots$};
 \node at (-2.5,4.5) {$\cdots$};
 \node at (-2.5,5.5) {$\cdots$};
 \node at (-3.5,5.5) {$2$};
 \node at (-3.5,4.5) {$1$};
 \node at (-3.5,3.5) {$0$};
 \node at (-3.5,2.65) {$\vdots$};
 \node at (-3.5,1.5) {$2$};
 \node at (-3.5,0.5) {$1$};
 \node at (-4.5,5.5) {$1$};
 \node at (-4.5,4.5) {$0$};
 \node at (-4.5,3.5) {$n$};
 \node at (-4.5,2.65) {$\vdots$};
 \node at (-4.5,1.5) {$1$};
 \node at (-4.5,0.5) {$0$};
 \node at (-5.5,5.5) {$0$};
 \node at (-5.5,4.5) {$n$};
 \node at (-5.5,3.5) {\tiny $n$$-$$1$};
 \node at (-5.5,2.65) {$\vdots$};
 \node at (-5.5,1.5) {$0$};
 \node at (-5.5,0.5) {$n$};
 \node at (-6.5,0.5) {$\cdots$};
 \node at (-6.5,1.5) {$\cdots$};
 \node at (-6.5,3.5) {$\cdots$};
 \node at (-6.5,4.5) {$\cdots$};
 \node at (-6.5,5.5) {$\cdots$};
 \node at (-.5,6.65) {$\vdots$};
 \node at (-1.5,6.65) {$\vdots$};
 \node at (-3.5,6.65) {$\vdots$};
 \node at (-4.5,6.65) {$\vdots$};
 \node at (-5.5,6.65) {$\vdots$};
\end{tikzpicture}\ .
\]

\begin{dfn}
The \defn{generalized Young walls} are constructed by placing $i$-colored boxes ($i \in I$) on the board ${\mathcal B}$ subject to the conditions:
\begin{enumerate}
\item the boxes are colored according to the board;
\item the colored boxes are placed in rows starting from the right.
\end{enumerate}
\end{dfn}

\begin{dfn}
A generalized Young wall is said to be \defn{proper} if, for each $p>q$ such that $p-q \equiv 0 \bmod n+1$, the number of boxes in the $p$th row from the bottom is less than or equal to the number of boxes in the $q$th row from the bottom.
\end{dfn}

\begin{ex}
Consider the following arrangement of boxes on the board $\mathcal B$ for $n=3$:
\[
Y=\wall{{0,3,2,1,0},{1},{2,1},{},{0,3,2}}\ , \ \ \ \ \ Y'=\wall{{0,3},{1},{},{},{0,3,2,1}}\ , \ \ \ \ \
Y'' = 
\begin{tikzpicture}[baseline=10,scale=.45]
\GYW{{0,,2},{1}}
\draw[very thick,white,fill=white] (-0.6,0) rectangle (-1.4,1.5);
\end{tikzpicture}\ .
\]
Then $Y$ and $Y'$ are both generalized Young walls, but $Y''$ is not since there is a break in the first row. Furthermore, $Y$ is proper.  The wall $Y'$ is not proper because the fifth row has four elements but the first row has two elements, and $5-1 \equiv 0 \bmod 4$.
\end{ex}

\begin{dfn}
The $k$th column $Y_k$ (from the right) of a generalized Young wall, for $k\ge 1$, contains a \defn{removable $\delta$} if one of each $i$ colored box can be removed from $Y_k$ and still obtain a generalized Young wall. In other words, if $a_{i,k}$ is the number of $i$-colored boxes in the $k$th column $Y_k$ ($i\in I$, $k\ge 1$), then $Y_k$ contains a removable $\delta$ if
\[
a_{i-1, k+1} < a_{i,k} \qquad \text{ for all } i \in I.
\]
If a generalized Young wall contains no removable $\delta$, it is said to be \defn{reduced}.
\end{dfn}


\begin{ex}
Consider the following generalized Young walls for $n=2$.
\begin{align*}
Y= \wall{{0,2,1,0},{1,0,2},{2,1,0}} \ , && Y' = \wall{{0,2,1,0},{1,0,2},{2,1,0},{0,2,1}}\ .
\end{align*}
Then $Y$ is reduced since removing a $0,1,$ and $2$ from any given column would leave the $0$ in the fourth column separated from its row, and therefore there is no removable $\delta$.
On the other hand, $Y'$ has a removable $\delta$ in the third column, so is not reduced.
\end{ex}

Let $\mathcal F(\infty)$ denote the set of all proper generalized Young walls. Let $\mathcal Y(\infty)$ denote the set of all proper reduced generalized Young walls. Note that $\mathcal Y(\infty) \subset \mathcal F(\infty)$.

Given any $Y \in \mathcal F(\infty)$, say that the leftmost box of any row is \defn{removable} and, if it is $i$-colored, then it is called a \defn{removable $i$-box}. Also, define the site left of the leftmost box in each row to be \defn{admissible}, and if a row has no boxes, then its rightmost site is admissible. If the site is $i$-colored, then it is called an \defn{$i$-admissible slot}. 

For any $Y \in \mathcal F(\infty)$, let $y_1, y_2, \ldots$ be the removable $i$-boxes and $i$-admissible slots ordered from left to right and bottom to top. The $i$-signature of $y_j$ is said to be $-$ if $y_j$ is removable and $+$ if $y_j$ is admissible. Then the $i$-signature of $Y$ is obtained by producing the sequence of $i$-signatures of $y_1, y_2, \ldots$ and then canceling out any $(+,-)$ pairs, resulting in a sequence of $-$'s followed by $+$'s. 

Define $\f_i Y$ to be the proper generalized Young wall obtained by placing an $i$-colored box at the site corresponding to the leftmost $+$ in the $i$-signature of $Y$ and $\e_i Y$ to be the proper generalized Young wall obtained by removing the $i$-box corresponding to the rightmost $-$ in the $i$-signature of $Y$. If no such $-$ exists, define $\e_i Y = 0$. Also, define the maps
\begin{align*}
\wt(Y) &= -\sum_{i \in I} k_i \alpha_i,\\
\varepsilon_i(Y) &= \text{ the number of } -\text{'s in the } i\text{-signature of } Y,\\
\varphi_i(Y) &= \varepsilon_i(Y) + \langle h_i, \wt(Y)\rangle.
\end{align*}
Here, $k_i$ is the number of $i$-colored boxes in $Y$ and the $\alpha_i$ are as defined for $U_q(A_n^{(1)})$. Then $\mathcal F(\infty)$ together with the maps above form an abstract $U_q(A_n^{(1)})$-crystal. For $\mathcal Y(\infty)$, more can be said. 

\begin{thm}[\cite{KS:10}]
The morphism $B(\infty) \longrightarrow \mathcal Y(\infty)$ such that $u_\infty \mapsto \varnothing$, where $\varnothing$ is the empty generalized Young wall, defines a $U_q(A_n^{(1)})$-crystal isomorphism.
\end{thm}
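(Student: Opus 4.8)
The approach I would take is to invoke the characterization of $B(\infty)$ due to Kashiwara and Saito, rather than attempt a direct identification of crystal graphs: the naive assignment $\f_{i_1}\cdots\f_{i_\ell}u_\infty \mapsto \f_{i_1}\cdots\f_{i_\ell}\varnothing$ is not visibly well defined, because two different Kashiwara paths out of $u_\infty$ may represent the same element, and it is exactly this coincidence of paths that would have to be matched on the Young wall side. The characterization says that a crystal $B$ with a distinguished element $b_0$ is isomorphic to $B(\infty)$ via $b_0\mapsto u_\infty$ as soon as the following hold: $\wt(b_0)=0$ and $\wt(B)\subset\sum_{i\in I}\ZZ_{\le 0}\alpha_i$; $\varepsilon_i(b_0)=0$ and $\varepsilon_i(b)\neq-\infty$ for all $i\in I$ and $b\in B$; and, for each $i\in I$, there is a strict crystal embedding $\Psi_i\colon B\hookrightarrow B\otimes B_i$ into the tensor product with the elementary crystal $B_i=\{b_i(-m):m\ge 0\}$, with $\Psi_i(b_0)=b_0\otimes b_i(0)$, with $\Psi_i(B)\subseteq B\otimes\{b_i(-m):m\ge 0\}$, and with the property that every $b\ne b_0$ satisfies $\Psi_i(b)\in B\otimes\{b_i(-m):m\ge 1\}$ for at least one $i$. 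I would verify these hypotheses for $B=\mathcal Y(\infty)$ and $b_0=\varnothing$.

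Two groups of hypotheses are routine. First, one checks that $\e_i$ and $\f_i$ preserve $\mathcal Y(\infty)$, so that the reduced walls form a subcrystal of $\mathcal F(\infty)$; this is a short analysis of how adding or deleting the single box dictated by the signature rule affects the inequalities $a_{i-1,k+1}<a_{i,k}$ defining a removable $\delta$, the point being that the ``leftmost $+$/rightmost $-$'' convention never creates one. Second, the weight and $\varepsilon$ conditions are immediate: $\wt(Y)=-\sum_i k_i\alpha_i$ lies in $\sum_i\ZZ_{\le 0}\alpha_i$ with $\wt(\varnothing)=0$, a wall of weight $0$ has all $k_i=0$ and hence is empty, $\varepsilon_i(\varnothing)=0$, and $\varepsilon_i(Y)$ is the number of uncancelled $-$'s in a finite signature, so always a nonnegative integer.

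The substance of the proof is the construction of the embeddings $\Psi_i$. These cannot be read off from the operators $\e_i,\f_i$ alone; imitating the role of the Kashiwara involution on $B(\infty)$, I would introduce a second family of operators $\e_i^\ast,\f_i^\ast$ on $\mathcal Y(\infty)$, defined by the reverse (dual) reading of the color-$i$ removable boxes and admissible slots of a wall, together with the associated statistic $\varepsilon_i^\ast(Y)$, and then set
\[
\Psi_i(Y)=(\e_i^\ast)^{\varepsilon_i^\ast(Y)}Y\ \otimes\ b_i\bigl(-\varepsilon_i^\ast(Y)\bigr).
\]
The normalization $\Psi_i(\varnothing)=\varnothing\otimes b_i(0)$ and the image condition are then built in, and the final hypothesis reduces to showing $\varepsilon_i^\ast(Y)>0$ for some $i$ whenever $Y\ne\varnothing$.

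The crux, and the step I expect to be hardest, is proving that each $\Psi_i$ is \emph{strict}, i.e.\ that it intertwines every $\e_j,\f_j$ ($j\in I$) with the tensor product action. The case $j=i$ amounts to checking that the two readings of color $i$ combine correctly through the elementary crystal $B_i$, while for $j\ne i$ one must show that altering a color-$j$ box leaves the $\ast$-reading of color $i$ unchanged in the relevant way. Controlling this interaction of two colorings on the board, simultaneously with the reduced ($\delta$-free) condition, is precisely the delicate Young wall bookkeeping that makes the argument technical. Once strictness is established, all hypotheses of the characterization are in place and the isomorphism $B(\infty)\cong\mathcal Y(\infty)$ follows.
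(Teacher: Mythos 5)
You should first note that the paper contains no proof of this statement at all: it is quoted with attribution to \cite{KS:10}, and the route the paper leans on is visible in the surrounding text --- the map $\Psi$ of \eqref{FtoM} is a strict morphism from $\mathcal{F}(\infty)$ to the monomial crystal, its restriction identifies the proper reduced walls $\mathcal{Y}(\infty)$ with the connected component $\MM(\infty)$, and $\MM(\infty)\cong B(\infty)$ is already known from \cite{KKS:07}. Your plan is a genuinely different strategy: verify the Kashiwara--Saito characterization of $B(\infty)$ directly on $\mathcal{Y}(\infty)$. That strategy is legitimate and standard for such realizations, your statement of the characterization is essentially correct (including the weight, $\varepsilon_i$, and image conditions, and the requirement that some $\varepsilon_i^\ast(Y)>0$ for $Y\neq\varnothing$), and your formula $\Psi_i(Y)=(\e_i^\ast)^{\varepsilon_i^\ast(Y)}Y\otimes b_i\bigl(-\varepsilon_i^\ast(Y)\bigr)$ matches the known embedding $B(\infty)\hookrightarrow B(\infty)\otimes B_i$. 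The monomial route buys you everything for free once $\Psi$ is shown to intertwine the operators, because the hard abstract work was done in \cite{KKS:07}; your route is self-contained and more intrinsic to Young walls, but it forces you to build the $\ast$-crystal structure from scratch.

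That is where the genuine gap sits, and you flag it yourself without closing it: the operators $\e_i^\ast,\f_i^\ast$ are only postulated. Defining them by ``the reverse (dual) reading'' of the $i$-colored removable boxes and admissible slots is an unsupported guess --- on most combinatorial models the Kashiwara involution admits no such naive description, and nothing in the outline shows that the reversed signature rule yields an abstract crystal at all, that $(\e_i^\ast)^{\max}$ of a proper reduced wall remains proper and reduced (needed for $\Psi_i$ to land in $\mathcal{Y}(\infty)\otimes B_i$), or that $\Psi_i$ is strict; strictness for $j\neq i$, which you correctly identify as the crux, is essentially the entire content of the theorem and is not reduced to anything checkable in your sketch. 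Similarly, calling the closure of $\mathcal{Y}(\infty)$ under $\e_i,\f_i$ ``routine'' conceals a real lemma: the signature rule acts on rows while the removable-$\delta$ condition $a_{i-1,k+1}<a_{i,k}$ constrains columns, and their compatibility must actually be proved (in \cite{KS:10} this is part of showing $\mathcal{Y}(\infty)$ is the connected component of $\varnothing$). So the skeleton is a viable alternative to the cited proof, but as written it postpones precisely the steps that constitute the theorem, and the one concrete new ingredient it proposes (the reversed-reading $\ast$-operators) is stated without any evidence that it has the required properties.
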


There is a map $\Psi\colon \mathcal{F}(\infty) \longrightarrow \hM$ defined by
\begin{equation}\label{FtoM}
\Psi(Y) = \prod_{i \in I}\prod_{k \geq 0} A_{i,k}^{-a_{i,k+1}}\one,
\end{equation}
where $a_{i,k}$ is the number of $i$-colored boxes in the $k$th column of $Y$.
It can be seen using results from \cite{KS:10} that $\mathcal Y(\infty)$ is isomorphic to $\MM(\infty)$ as a $U_q(A_n^{(1)})$-crystal using the restriction of \eqref{FtoM}.

\section{The weight function in type $A_n^{(1)}$}\label{sec:typeA}

\subsection{Specialization to type $A_1^{(1)}$}

In type $A_1^{(1)}$, we have
\begin{align*}
A_{0,k} &= Y_{0,k}Y_{0,k+1}Y_{1,k+1}^{-2}, &
A_{1,k} &= Y_{1,k}Y_{1,k+1}Y_{0,k}^{-2}.
\end{align*}

\begin{lemma}\label{thm:1}
For $M = \prod_{k \geq 0} Y_{0,k}^{y_{0,k}}Y_{1,k}^{y_{1,k}}\one \in \MM(\infty)$, define $a_{i,k}$ recursively as follows:
\begin{equation}\label{eq:A11recursion}
\begin{aligned}
a_{1,0} &= y_{1,0}, &
a_{1,k} &= y_{1,k} + 2a_{0,k-1}-a_{1,k-1},\\
a_{0,0} &= y_{0,0} + 2a_{1,0}, &
a_{0,k} &= y_{0,k} + 2a_{1,k} - a_{0,k-1}.
\end{aligned}
\end{equation}
Then $M = \prod_{k \geq 0} A_{0,k}^{a_{0,k}} A_{1,k}^{a_{1,k}}\one$.
\end{lemma}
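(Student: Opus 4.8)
The plan is to prove the identity by comparing, variable by variable, the exponent of each $Y_{i,m}$ appearing on the two sides. Since the $Y_{i,k}$ and $\one$ are commuting variables, the asserted equality of monomials holds if and only if, for every $i \in \{0,1\}$ and every $m \ge 0$, the total exponent of $Y_{i,m}$ in $\prod_{k\ge 0} A_{0,k}^{a_{0,k}}A_{1,k}^{a_{1,k}}$ equals $y_{i,m}$. The one genuinely non-formal ingredient is finiteness: a priori the recursion could produce infinitely many nonzero $a_{i,k}$, in which case the right-hand product would not be a monomial. This is exactly where the hypothesis $M \in \MM(\infty)$ enters. Because every element of $\MM(\infty)$ is obtained from $\one$ by finitely many applications of the $\f_i$, and each $\f_i$ multiplies by a single $A_{i,k}^{-1}$, there exist integers $b_{i,k}$, all but finitely many zero, with $M = \prod_{k\ge 0} A_{0,k}^{b_{0,k}}A_{1,k}^{b_{1,k}}\one$. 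The goal then becomes to show $a_{i,k} = b_{i,k}$ for all $i,k$.

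First I would expand $\prod_{k\ge 0}A_{0,k}^{b_{0,k}}A_{1,k}^{b_{1,k}}$ using the explicit formulas $A_{0,k}=Y_{0,k}Y_{0,k+1}Y_{1,k+1}^{-2}$ and $A_{1,k}=Y_{1,k}Y_{1,k+1}Y_{0,k}^{-2}$, and collect the exponent of a fixed $Y_{0,m}$ and $Y_{1,m}$. Tracking which factors contribute a given variable, one finds for $m\ge 1$ that the exponent of $Y_{0,m}$ is $b_{0,m}+b_{0,m-1}-2b_{1,m}$ (coming from $A_{0,m}$, $A_{0,m-1}$, and $A_{1,m}$) and the exponent of $Y_{1,m}$ is $b_{1,m}+b_{1,m-1}-2b_{0,m-1}$ (coming from $A_{1,m}$, $A_{1,m-1}$, and $A_{0,m-1}$). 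Setting these equal to $y_{0,m}$ and $y_{1,m}$ yields the two relations
\[
y_{0,m} = b_{0,m}+b_{0,m-1}-2b_{1,m}, \qquad y_{1,m} = b_{1,m}+b_{1,m-1}-2b_{0,m-1}.
\]
The boundary case $m=0$ must be treated separately, since the factors $A_{i,-1}$ are absent; there only $A_{0,0}$ and $A_{1,0}$ contribute, giving $y_{0,0}=b_{0,0}-2b_{1,0}$ and $y_{1,0}=b_{1,0}$.

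Finally I would observe that solving these relations for $b_{1,m}$ and $b_{0,m}$ reproduces verbatim the recursion \eqref{eq:A11recursion} defining the $a_{i,k}$: the base equations give $b_{1,0}=y_{1,0}$ and $b_{0,0}=y_{0,0}+2b_{1,0}$, and for $m\ge 1$ one gets $b_{1,m}=y_{1,m}+2b_{0,m-1}-b_{1,m-1}$ and $b_{0,m}=y_{0,m}+2b_{1,m}-b_{0,m-1}$. Since this is a forward recursion determining the values in the order $b_{1,0}, b_{0,0}, b_{1,1}, b_{0,1}, \dots$, each from previously determined ones, an easy induction on $m$ shows $b_{i,k}=a_{i,k}$ for all $i,k$, using the identical recursion satisfied by the $a_{i,k}$. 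In particular $a_{i,k}$ is finitely supported, and $M=\prod_{k\ge 0}A_{0,k}^{a_{0,k}}A_{1,k}^{a_{1,k}}\one$, as claimed. The only real care needed is in the bookkeeping of the contributing factors and in the $m=0$ boundary terms; everything else is a formal consequence of the uniqueness built into the forward recursion.
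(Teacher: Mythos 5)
Your proof is correct and follows essentially the same route as the paper's: both use $M \in \MM(\infty)$ to obtain an $A$-decomposition of $M$, expand the $A_{i,k}$ in the $Y$-variables, equate the exponents of each $Y_{i,m}$ (with the $m=0$ case handled separately), and recover exactly the recursion \eqref{eq:A11recursion}. Your explicit $b_{i,k}$ bookkeeping and the forward-recursion uniqueness argument simply spell out what the paper leaves implicit in its remark that ``it suffices to show that the recurrence holds for these values.''
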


\begin{proof}
Since $M \in \MM(\infty)$, there exists some $a_{0,k}, a_{1,k} \in \ZZ$ such that $M = \prod_{k \geq 0} A_{0,k}^{a_{0,k}} A_{1,k}^{a_{1,k}}\one$. Thus, it suffices to show that the recurrence holds for these values $a_{i,k}$.
Expanding the terms $A_{0,k}^{a_{0,k}}$ and $A_{1,k}^{a_{1,k}}$ shows that
\[
M = Y_{0,0}^{a_{0,0}-2a_{1,0}} Y_{1,0}^{a_{1,0}}Y_{0,1}^{a_{0,1}}Y_{1,1}^{a_{1,1}-2a_{0,1}} \prod_{k \geq 1} A_{0,k}^{a_{0,k}} A_{1,k}^{a_{1,k}}\one.
\]
Since none of the terms in $\prod_{k \geq 1} A_{0,k}^{a_{0,k}} A_{1,k}^{a_{1,k}}\one$ contribute to the power on $Y_{0,0}$ or $Y_{1,0}$, equating powers yields
$y_{1,0} = a_{1,0}$
and
$y_{0,0} = a_{0,0} - 2a_{1,0}$, so
$a_{0,0} = y_{0,0} + 2 a_{1,0}$.
Thus, the first two desired equations hold.

Next, for some $m \geq 1$, consider
\[
M = \left(\prod_{k=0}^{m-2} A_{0,k}^{a_{0,k}}A_{1,k}^{a_{1,k}}\right)A_{0,m-1}^{a_{0,m-1}}A_{1,m-1}^{a_{1,m-1}}A_{0,m}^{a_{0,m}}A_{1,m}^{a_{1,m}} \left(\prod_{k\ge m+1} A_{0,k}^{a_{0,k}}A_{1,k}^{a_{1,k}}\right)\one.
\]
Note that $\prod_{k=0}^{m-2} A_{0,k}^{a_{0,k}}A_{1,k}^{a_{1,k}}\one$, when expanded, only yields values of $Y_{i,k}$ where $k < m$, and the product $\prod_{k\ge m+1} A_{0,k}^{a_{0,k}}A_{1,k}^{a_{1,k}}\one$ will only yield powers of $Y_{i,k}$ where $k > m$. Thus, in the expansion of $A_{1,m-1}^{a_{1,m-1}}A_{0,m}^{a_{0,m}}A_{1,m}^{a_{1,m}}$, the powers on $Y_{0,m}$ and $Y_{1,m}$ will be exactly $y_{0,m}$ and $y_{1,m}$ respectively. Since
\[
A_{1,m-1}^{a_{1,m-1}}A_{0,m}^{a_{0,m}}A_{1,m}^{a_{1,m}} = Y_{0,m-1}^{a_{0,m-1} -2a_{1,m-1}} Y_{1,m-1}^{a_{1,m-1}} Y_{0,m}^{a_{0,m-1}-2a_{1,m} + a_{0,m}} Y_{1,m}^{a_{1,m-1} -2a_{0,m-1} + a_{1,m}},
\]
equating powers gives
$y_{0,m} = a_{0,m-1}-2a_{1,m} + a_{0,m}$ and
$y_{1,m} = a_{1,m-1} -2a_{0,m-1} + a_{1,m}$.
These can be rearranged to yield the remaining desired equations.
\end{proof}

For the following lemma, use the convention that an empty sum is $0$.

\begin{lemma}\label{thm:2}
Let $M = \prod_{k \ge0 } Y_{0,k}^{y_{0,k}}Y_{1,k}^{y_{1,k}}\one = \prod_{k \geq 0} A_{0,k}^{a_{0,k}} A_{1,k}^{a_{1,k}}\one \in \MM(\infty)$. Then, for all $k \geq 0$, we have
\begin{align*}
a_{0,k} &= \sum_{i=0}^k (2i+1) y_{0,k-i} + (2i+2) y_{1,k-i},\\
a_{1,k} &= (2k+1)y_{1,0} + \sum_{i=0}^{k-1} (2i+1)y_{1,k-i} + (2i+2)y_{0,k-i-1}.
\end{align*}
\end{lemma}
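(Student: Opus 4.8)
The plan is to argue by induction on $k$, establishing both closed forms \emph{simultaneously}; a coupled induction is forced because the recursion \eqref{eq:A11recursion} expresses $a_{1,k}$ and $a_{0,k}$ in terms of one another and of the level-$(k-1)$ data. Before starting, I would reindex each target sum by the substitution $j = k-i$, rewriting the two claims in the uniform shape
\begin{align*}
a_{0,k} &= \sum_{j=0}^k \bigl[(2(k-j)+1)\,y_{0,j} + (2(k-j)+2)\,y_{1,j}\bigr],\\
a_{1,k} &= \sum_{j=0}^k \bigl[2(k-j)\,y_{0,j} + (2(k-j)+1)\,y_{1,j}\bigr],
\end{align*}
where the vanishing coefficient $2(k-k)=0$ of $y_{0,k}$ in $a_{1,k}$ and the empty-sum convention reproduce exactly the stated formulas (in particular, the isolated $(2k+1)y_{1,0}$ term is just the $j=0$ summand). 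Casting both formulas as single sums over a common index range makes the ensuing coefficient comparison transparent and sidesteps the awkward mixed indexing of the original statement.

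For the base case $k=0$ I would simply evaluate the two sums: they collapse to $a_{0,0}=y_{0,0}+2y_{1,0}$ and $a_{1,0}=y_{1,0}$, matching the initial conditions in \eqref{eq:A11recursion}. For the inductive step, assuming both identities at level $k-1$, I would prove the $a_{1,k}$ formula first, since $a_{1,k}=y_{1,k}+2a_{0,k-1}-a_{1,k-1}$ involves only level-$(k-1)$ quantities. Substituting the inductive closed forms for $a_{0,k-1}$ and $a_{1,k-1}$ and reading off the coefficient of each variable, one finds, for $0\le j\le k-1$, the value $2\bigl(2(k-1-j)+1\bigr)-2(k-1-j)=2(k-j)$ on $y_{0,j}$ and $2\bigl(2(k-1-j)+2\bigr)-\bigl(2(k-1-j)+1\bigr)=2(k-j)+1$ on $y_{1,j}$, while the newly introduced $y_{1,k}$ enters with coefficient $1=2(k-k)+1$ and $y_{0,k}$ with coefficient $0=2(k-k)$; these are precisely the predicted coefficients.

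With the $a_{1,k}$ identity in hand, I would feed it together with the level-$(k-1)$ form of $a_{0,k-1}$ into $a_{0,k}=y_{0,k}+2a_{1,k}-a_{0,k-1}$ and repeat the coefficient comparison: for $0\le j\le k-1$ one computes $2\cdot 2(k-j)-\bigl(2(k-1-j)+1\bigr)=2(k-j)+1$ on $y_{0,j}$ and $2\bigl(2(k-j)+1\bigr)-\bigl(2(k-1-j)+2\bigr)=2(k-j)+2$ on $y_{1,j}$, while the boundary variables $y_{0,k}$ and $y_{1,k}$ pick up coefficients $1=2(k-k)+1$ and $2=2(k-k)+2$ respectively. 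This matches the target and closes the induction.

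The only genuine obstacle here is clerical rather than conceptual: keeping the index ranges aligned across the three sums being combined, so that the $j=k$ boundary terms (the freshly introduced $y_{0,k},y_{1,k}$) and the empty-sum/zero-coefficient conventions are handled consistently. Re-expressing everything as a single sum over $0\le j\le k$ with a closed-form coefficient of the shape $2(k-j)+c$ is exactly the device that tames this bookkeeping; once the formulas are in that shape, the inductive step reduces to a one-line arithmetic check of affine-linear coefficients for each of the two variable families.
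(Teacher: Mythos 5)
Your proof is correct, but it takes a genuinely different route from the paper's. You verify the closed forms by a coupled induction on $k$ directly from the recursion \eqref{eq:A11recursion}, after reindexing both formulas into the uniform shape with coefficient $2(k-j)+c$ on $y_{i,j}$; your base case and all the coefficient computations check out, including the boundary terms (the new $y_{1,k}$ entering $a_{1,k}$ with coefficient $1=2(k-k)+1$, and the vanishing coefficient $2(k-k)=0$ that absorbs the absent $y_{0,k}$ term), and proving the $a_{1,k}$ formula first so it can be fed into $a_{0,k}=y_{0,k}+2a_{1,k}-a_{0,k-1}$ is exactly the right ordering for the coupled step. The paper instead interleaves the two families into a single sequence, setting $z_{2i-1}=a_{1,i-1}$ and $z_{2i}=a_{0,i-1}$ (and similarly $r$ for $y$), which collapses \eqref{eq:A11recursion} into the single second-order recurrence $z_k=r_k+2z_{k-1}-z_{k-2}$; this is solved with a generating function, $g(x)=\bigl(\sum_{k\ge0}r_kx^k\bigr)/(1-x)^2$, giving the convolution $z_k=\sum_{i=0}^k(i+1)r_{k-i}$, from which the two stated formulas are recovered by separating even- and odd-indexed terms. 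The trade-off is that the paper's method \emph{derives} the coefficients rather than assuming them --- the $1/(1-x)^2$ kernel explains why the arithmetic-progression weights $2i+1$ and $2i+2$ appear at all --- whereas your induction is a verification that requires the closed forms in advance; on the other hand, your argument is more elementary (no formal power series), and since the lemma states the formulas to be proved, verification is entirely sufficient. Your uniform $2(k-j)+c$ rewriting is also arguably cleaner bookkeeping than the paper's final even/odd index splitting, which is the most delicate part of the published argument.
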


\begin{proof}
Note that \eqref{eq:A11recursion} can be rewritten as a single recurrence relation.  Namely, for $i \geq 1$, define $z_{2i-1} = a_{1,i-1}$, $z_{2i} = a_{0,i-1}$, $r_{2i-1}= y_{1,i-1}$, and $r_{2i} = y_{0,i-1}$ with the additional condition that $r_0=z_0=0$. Then the Lemma \ref{thm:1} equations can be encoded as
\[
z_0 = 0, \ \ \
z_1= r_1, \ \ \
z_k = r_k + 2z_{k-1}-z_{k-2}.
\]
Let $g(x) = \sum_{k\ge0} z_k x^k$ be the generating function for $(z_k)_{k\ge0}$. Then
\begin{align*}
g(x) &= r_1 x + \sum_{k\ge2} z_kx^k \\
&= r_1 x + \sum_{k\ge2} (r_k + 2z_{k-1}-z_{k-2})x^k \\
&= r_1 x + \sum_{k\ge2} r_k + 2x \sum_{k\ge2} z_{k-1}x^{k-1} - x^2\sum_{k\ge2} z_{k-2}x^{k-2}.
\end{align*}
Note that the summations $\sum_{k\ge2} z_{k-1}x^{k-1}$ and $\sum_{k\ge2} z_{k-2}x^{k-2}$ can both be reindexed to show equivalence to $g(x)$. Thus,
\[
g(x) = r_1 x + \sum_{k\ge2} r_k + 2x g(x) -x^2 g(x).
\]
Solving the above equation for $g(x)$ yields
\[
g(x) = \frac{\sum_{k\ge0} r_k }{x^2 -2x + 1}
= \frac{\sum_{k\ge0} r_k}{(x-1)^2}
= \frac{1}{(1-x)^2} \sum_{k\ge0} r_k
= \biggl(\sum_{i\ge0} (i+1)x^i\biggr) \biggl(\sum_{k\ge0} r_k\biggr).
\]
This can be expanded to a single power series to yield
\[
g(x) = \sum_{k\ge0} \left(\sum_{i=0}^k (i+1)r_{k-i}\right) x^k.
\]
Thus,
\[
z_k = \sum_{i=0}^k (i+1)r_{k-i}.
\]

This can be broken up into two different cases to show the desired result. First, note that
\[
a_{0,k} = z_{2k+2} = \sum_{i=0}^{2k+2} (i+1) r_{2k+2-i}.
\]
The sum can be broken up into even and odd parts, so
\begin{align*}
z_{2k+1} &= \sum_{i=0}^{k+1} (2i+1) r_{2k-2i+2} + \sum_{i=0}^{k} (2i+2)r_{2k-2i+1} \\
&= (2k+3) r_0 + \sum_{i=0}^{k} \bigl( (2i+1) r_{2k-2i+2} + (2i+2) r_{2k-2i+1} \bigr) \\
&= \sum_{i=0}^{k} \bigl( (2i+1) y_{0,k-i} + (2i+2) y_{1,k-i} \bigr).
\end{align*}
Now, to prove the final part of the theorem, consider
\begin{align*}
a_{1,k} = z_{2k+1} &= \sum_{i=0}^{2k+1}(i+1) r_{2k+1-i}\\
&= \sum_{i=0}^{k} (2i+1) r_{2k-2i+1} + \sum_{i=0}^{k} (2i+2) r_{2k-2i}\\
&= (2k+2) r_0 + (2k+1) r_1 + \sum_{i=0}^{k-1} (2i+1) r_{2k-2i+1} + \sum_{i=0}^k (2i+2) r_{2k-2i}\\
&= (2k+1) y_{1,0} + \sum_{i=0}^{k-1} \bigl((2i+1)y_{1,k-i} + (2i+2) y_{0,k-i-1}\bigr). \qedhere
\end{align*}
\end{proof}

Note that Lemma \ref{thm:2} implies that the $a_{i,k}$ are uniquely determined for a given $M$.  This gives the following result.

\begin{thm}\label{prop:1}
If $M = \prod_{i \in I} \prod_{k \geq 0} Y_{i,k}^{y_{i,k}}\one \in \MM(\infty)$, then 
\[
D(M) = \sum_{k\ge0} \sum_{j=0}^{k} \bigl( (2j+1) y_{0,k-j} + (2j+2) y_{1,k-j} \bigr).
\]
\end{thm}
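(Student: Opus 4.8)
The plan is to read the coefficient of $\delta$ directly off the $A$-factorization, reducing the statement to Lemma \ref{thm:2}. Recall from the discussion following Problem \ref{prob:main} that for any $M \in \MM(\infty)$ written as $M = \prod_{i\in I}\prod_{k\ge0} A_{i,k}^{a_{i,k}}\one$, the coefficient of $\delta$ in $\wt(M)$ is exactly $\sum_{k\ge0} a_{0,k}$. This is because each factor $A_{0,k}^{-1}$ corresponds to one application of $\f_0$ along a path from $\one$ to $M$, and each $0$-arrow lowers the $\delta$-coefficient by one (using $\alpha_0 = \delta - \theta$, which keeps the $\theta$-contribution inside $\ZZ\Lambda_0 \oplus \cdots \oplus \ZZ\Lambda_n$). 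Hence by definition $D(M) = \sum_{k\ge0} a_{0,k}$.

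First I would record that this quantity is well-defined: by Lemma \ref{thm:2} the exponents $a_{i,k}$ are uniquely determined by the exponents $y_{i,k}$ of $M$, so there is no ambiguity in the factorization of $M$ in terms of the $A_{i,k}$. Moreover, since $y_{i,k} = 0$ for all but finitely many $k$, the closed form of Lemma \ref{thm:2} shows that each $a_{0,k}$ is a finite sum and that $a_{0,k} = 0$ for all sufficiently large $k$; consequently $\sum_{k\ge0} a_{0,k}$ is a finite sum.

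Then I would substitute the explicit expression
\[
a_{0,k} = \sum_{j=0}^k \bigl( (2j+1) y_{0,k-j} + (2j+2) y_{1,k-j} \bigr)
\]
from Lemma \ref{thm:2} into $D(M) = \sum_{k\ge0} a_{0,k}$ and interchange the two summations in the trivial way, which yields the claimed formula for $D(M)$ verbatim (after relabeling the inner index $i \mapsto j$).

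The main point here is conceptual rather than computational: all the genuine work, namely inverting the change of variables from the $Y_{i,k}$ to the $A_{i,k}$ and then solving the resulting linear recurrence via a generating function, has already been carried out in Lemmas \ref{thm:1} and \ref{thm:2}. The theorem is simply the assembly of the identity $D(M) = \sum_{k\ge0} a_{0,k}$ with the closed form for $a_{0,k}$, so I expect no real obstacle beyond confirming that the summation ranges align and that the relevant sums are finite.
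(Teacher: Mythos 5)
Your proposal is correct and matches the paper's own reasoning exactly: the paper derives Theorem \ref{prop:1} precisely by combining the observation after Problem \ref{prob:main} that $D(M) = \sum_{k\ge0} a_{0,k}$ (since each $A_{0,k}^{-1}$ factor records one $0$-arrow) with the uniqueness and closed form of the $a_{0,k}$ established in Lemma \ref{thm:2}. The only cosmetic remark is that no genuine interchange of summations is needed --- substituting Lemma \ref{thm:2} into $\sum_{k\ge0} a_{0,k}$ already produces the stated double sum after relabeling the inner index.
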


\begin{ex}
Let $M =\f_0 \f_1 \f_0 \f_0 \f_0 \one$. Then $M = Y_{0,0}^{-3}Y_{0,1}^{-2}Y_{0,2}^{-1}Y_{1,1}^5Y_{1,2}\one$ and
\begin{align*}
\sum_{k\ge0} \sum_{j=0}^k & \bigl( (2j+1) y_{0,k-j} + (2j+2) y_{1,k-j} \bigr) \\
&= (y_{0,0} + 2y_{1,0}) + (y_{0,1} + 2y_{1,1} + 3y_{0,0} + 4y_{1,0}) \\
& \ \ \ \ + (y_{0,2} + 2y_{1,2} + 3y_{0,1} + 4y_{1,1} + 5y_{0,0} + 6y_{1,0})  \\
& \ \ \ \ + (y_{0,3} + 2y_{1,3} + 3y_{0,2} + 4y_{1,2} + 5y_{0,1} + 6y_{1,1} + 7y_{0,0} + 8y_{1,0}) \\
& \ \ \ \ + (y_{0,4} + 2y_{1,4} + 3y_{0,3} + 4y_{1,3} + 5y_{0,2} + 6y_{1,2} + 7y_{0,1} + 8y_{1,1} + 9y_{0,0} + 10y_{1,0}) + \cdots \\
&= \bigl(-3+2(0)\bigr) + \bigl(-2 + 2(5) + 3(-3) + 4(0)\bigr) \\
& \ \ \ \ + \bigl(-1 + 2(1) + 3(-2) + 4(5) + 5(-3) + 6(0)\bigr) \\
& \ \ \ \ + \bigl(0 + 2(0) + 3(-1) + 4(1) + 5(-2) + 6(5) + 7(-3) + 8(0)\bigr) \\
& \ \ \ \ + \bigl(0 + 2(0) + 3(0) + 4(0) + 5(-1) + 6(1) + 7(-2) + 8(5) + 9(-3) + 10(0)\bigr) +\cdots \\
&= -3 - 1 + 0 + 0 + 0 + \cdots.
\end{align*}
Thus, by Theorem \ref{prop:1}, we have $D(M) = -4$. This matches the number of $\f_0$'s applied to reach $M$, which is the expected result.
\end{ex}

\begin{ex}
Let $M=\f_0 \f_1 \f_1 \f_0 \one = Y_{0,0}^{-1} Y_{0,1}^2 Y_{0,2}^{-1}\one$. Applying Lemma \ref{thm:2} gives
\begin{align*}
a_{0,0} &= y_{0,0}+2y_{1,0}= -1,\\
a_{0,1} &= y_{0,1} + 2y_{1,1} + 3y_{0,0} + 4y_{1,0} = 2-3=-1,\\
a_{0,2} &= y_{0,2} + 2y_{1,2} + 3y_{0,1} + 4y_{1,1}+5y_{0,0} +6y_{1,0}=-1+6-5=0.
\end{align*}
Adding up these values shows that $D(M) = -2$ according to Theorem \ref{prop:1}, which coincides with the fact that $\wt(M) = \wt(\f_0 \f_1 \f_1 \f_0 \one) = -2\alpha_0 - 2\alpha_1 = -2\delta$.
\end{ex}

\subsection{General result for type $A_n^{(1)}$, $n\ge 2$}

In the case $A_n^{(1)}$ for $n\ge 2$, the same method used for $A_1^{(1)}$ does not yield an explicit formula for the weight function. Consider the following result.

\begin{lemma}\label{thm:3}
If $M = \prod_{k \geq 0} \prod_{i \in I} Y_{i,k}^{y_{i,k}}\one = \prod_{k \geq 0}\prod_{i\in I} A_{i,k}^{a_{i,k}}\one \in \MM(\infty)$, then
\begin{equation}\label{eq:An1recursion}
a_{i,k} -a_{i-1,k} = \sum_{\ell=0}^k y_{i+\ell, k-\ell}
\end{equation}
for all $i \in I$ and $k \geq0$.
\end{lemma}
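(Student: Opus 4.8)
The plan is to expand the product $\prod_{k\ge0}\prod_{i\in I}A_{i,k}^{a_{i,k}}$ into a product of the variables $Y_{i,k}$ and compare exponents with the given expression $\prod_{k\ge0}\prod_{i\in I}Y_{i,k}^{y_{i,k}}$. First I would record the explicit shape of $A_{i,k}$ in type $A_n^{(1)}$. Since every node of the affine Dynkin diagram has exactly the two neighbors $i-1$ and $i+1$ (indices taken in $\ZZ/(n+1)\ZZ$), and since the fixed choice of $(o_{j,i})$ gives $o_{i-1,i}=1$ and $o_{i+1,i}=0$ (the special values $o_{0,n}=0$, $o_{n,0}=1$ being exactly what is needed to make this uniform at the wraparound), one gets
\[
A_{i,k} = Y_{i,k}\,Y_{i,k+1}\,Y_{i-1,k+1}^{-1}\,Y_{i+1,k}^{-1}
\]
for every $i\in I$ and $k\ge0$.

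Next I would track the four places a fixed variable $Y_{i,k}$ can arise in the expansion: as the first factor of $A_{i,k}$, as the second factor of $A_{i,k-1}$, as the inverse third factor of $A_{i+1,k-1}$, and as the inverse fourth factor of $A_{i-1,k}$. Because $M$ is a monomial, both families $(y_{i,k})$ and $(a_{i,k})$ vanish for all but finitely many $k$, so these are finite bookkeeping matters and no convergence issue arises; I adopt the convention $a_{i,-1}=0$. Collecting exponents yields the recurrence
\[
y_{i,k} = a_{i,k} - a_{i-1,k} + a_{i,k-1} - a_{i+1,k-1},
\]
valid for all $i\in I$ and $k\ge0$.

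The idea now is to isolate the quantity the lemma is about. Setting $b_{i,k}=a_{i,k}-a_{i-1,k}$, the displayed recurrence becomes
\[
b_{i,k} = y_{i,k} + b_{i+1,k-1},
\]
since $a_{i,k-1}-a_{i+1,k-1} = -(a_{i+1,k-1}-a_{i,k-1}) = -b_{i+1,k-1}$. I would then finish by induction on $k$. The base case $k=0$ reads $b_{i,0}=y_{i,0}$ because $b_{i+1,-1}=0$ under the convention $a_{i,-1}=0$, matching $\sum_{\ell=0}^{0}y_{i+\ell,-\ell}$. For the inductive step, substituting the hypothesis $b_{i+1,k-1}=\sum_{\ell=0}^{k-1}y_{i+1+\ell,k-1-\ell}$ and reindexing $\ell\mapsto\ell-1$ gives $b_{i,k}=\sum_{\ell=0}^{k}y_{i+\ell,k-\ell}$, as desired.

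The only genuinely delicate point is the index bookkeeping in the first two steps: one must verify that the cyclic conventions for $o_{i,j}$ really do make the closed form of $A_{i,k}$ uniform across $i=0$ and $i=n$, and that the four contributions to the exponent of $Y_{i,k}$ are correctly attributed to $A_{i,k}$, $A_{i,k-1}$, $A_{i+1,k-1}$, and $A_{i-1,k}$. Once the recurrence $b_{i,k}=y_{i,k}+b_{i+1,k-1}$ is in hand, the rest is a routine telescoping induction.
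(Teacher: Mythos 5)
Your proof is correct and takes essentially the same route as the paper's: both arguments derive the exponent identity $y_{i,k} = a_{i,k} - a_{i-1,k} + a_{i,k-1} - a_{i+1,k-1}$ by comparing powers of $Y_{i,k}$ in the expansion of $\prod A_{i,k}^{a_{i,k}}\one$ and then induct on $k$, your recurrence $b_{i,k} = y_{i,k} + b_{i+1,k-1}$ being exactly the telescoping step the paper carries out inside its inductive step. Your explicit check that the conventions $o_{0,n}=0$ and $o_{n,0}=1$ make $A_{i,k} = Y_{i,k}Y_{i,k+1}Y_{i-1,k+1}^{-1}Y_{i+1,k}^{-1}$ uniform across the cyclic wraparound is a welcome detail that the paper leaves implicit.
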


\begin{proof}

This will be a proof by induction on $k$. As a base case, note that when $k=0$, 
\[
a_{i,0}-a_{i-1,0} = y_{i,0}.
\]
To show that this holds, note that the only terms in  $\prod_{k \geq 0}\prod_{i\in I} A_{i,k}^{a_{i,k}}\one$ that contain any $Y_{i,0}$ are $A_{i,0}$ and $A_{i-1,0}$. These contribute $Y_{i,0}$ and $Y_{i,0}^{-1}$ respectively to the overall product, so certainly $y_{i,0}=a_{i,0}-a_{i-1,0}.$

Next, for the sake of induction, assume that the result holds for all values of $i$ and a given value $k$. Then note that
\[
\sum_{\ell=0}^{k+1} y_{i+\ell,k+1-\ell} = y_{i,k+1} + \sum_{\ell=1}^{k+1} y_{i+\ell,k+1-\ell}=
y_{i,k+1} + \sum_{\ell=0}^k y_{i+\ell+1,k-\ell}.
\]
By the inductive assumption, this is equal to
\[
y_{i,k+1} + a_{i+1,k} - a_{i,k}.
\]
Now consider $y_{i,k+1}$. Note that $y_{i,k+1} = a_{i,k} - a_{i+1, k} + a_{i,k+1} - a_{i-1,k+1}$. This equation can be obtained by considering the values of $\prod_{m \geq 0}\prod_{j\in I} A_{j,m}^{a_{j,m}}$ that contribute to $Y_{i,k}$. Using this equation yields
\begin{align*}
\sum_{m=0}^{k+1} y_{i+m, k+1-m} &=a_{i,k} - a_{i+1, k} + a_{i,k+1} - a_{i-1,k+1}+a_{i+1,k}-a_{i,k}\\
&= a_{i,k+1}-a_{i-1,k+1},
\end{align*}
so the result holds by induction.
\end{proof}

Consider the general solution to the above equations for a fixed $k$. Note that if a specific solution is given by integers $r_i$ such that $a_{i,k} = r_i$ for $i \in I$, then for any $t \in \ZZ$, another solution is given by $a_{i,k} = r_i + t$ for all $i \in I$. Consider now the following motivating example.

\begin{ex}
In type $A_2^{(1)}$, let $M =\f_1 \f_0 \one = Y_{0,0}^{-1}Y_{1,1}^{-1}Y_{2,0}Y_{2,1}\one \in \MM(\infty)$.  This can be written as $A_{0,0}^{-2}A_{1,0}^{-2}A_{2,0}^{-1}A_{0,1}^{-1}A_{1,1}^{-1}A_{2,1}^{-1}\one$. Consider the following generalized Young wall
\[
Y'' = \wall{{0,2},{1,0},{2,1},{0},{1}} \in \mathcal F(\infty).
\]
Then $\Psi(Y'') = M$ (see Equation \eqref{FtoM}).
Note that the second column from the right has a removable $\delta$. Removing it yields the generalized Young wall
\[
Y' = \wall{{0},{1},{2},{0},{1}} \in \mathcal{F}(\infty) .
\]
This new generalized Young wall also has a removable $\delta$, this time in the first column. Removing it yields
\[
Y = \begin{tikzpicture}[baseline=5,scale=.45]
 \GYW{{0},{1}}
\end{tikzpicture} \in \mathcal{Y}(\infty) .
\]
Then $\Psi(Y) = A_{0,0}^{-1}A_{1,0}^{-1}\one$, which is in $\MM(\infty)$ because $\Psi$ defines the isomorphism between $\MM(\infty)$ and $\mathcal{Y}(\infty)$.  Expanding $\Psi(W)$ in terms of the $Y$-variables gives $Y_{0,0}^{-1}Y_{1,1}^{-1}Y_{2,0}Y_{2,1}\one$, and we claim that $Y$ is the reduced, proper generalized Young wall that corresponds to the original $M$.

To explain our claim, note that eliminating a removable $\delta$ from the $k$th column of a generalized Young wall is the same as subtracting $1$ from $a_{i,k}$ for each $i\in I$. It can be easily checked that, in type $A_n^{(1)}$,  $\prod_{i \in I} A_{i,k}\one = \one$ for any $k \in \ZZ_{\geq 0}$. Thus, eliminating a removable $\delta$ does not change which monomial a generalized Young wall corresponds. Therefore, the weight function on $\mathcal Y(\infty)$ can be used here to calculate the coefficient of $\delta$ on the weight function of elements of $\MM(\infty)$.
\end{ex}

This idea gives rise to the following algorithm for computing $D(M)$ for $M \in \MM(\infty)$.

\begin{thm}\label{thm:4}
Let $M = \prod_{k \geq0}\prod_{i \in I} Y_{i,k}^{y_{i,k}}\one \in \MM(\infty)$. Then $D(M)$ is computed using the following algorithm.
\begin{enumerate}
\item\label{alg:step1} Find the maximum value $m$ such that there exists an $i\in I$ with $y_{i,m+1} \neq 0$.
\item\label{alg:step2} Find the unique solution to $\{a_{i,m}\}_{i \in I}$ such that $a_{i,m} \in \ZZ_{\leq 0}$, for all $i\in I$, and such that there exists an $i\in I$ with $a_{i,m}=0$. Let these be the values for $a_{i,m}$. Set $k=m-1$.
\item\label{alg:step3} Find the maximal solution to $\{a_{i,k}\}_{i \in I}$ such that $a_{i,k} \in \ZZ_{\leq 0}$ and $a_{i,k} \leq a_{i-1,k+1}$ for all $i$. Decrease $k$ by $1$.
\item\label{alg:step4} Repeat step (\ref{alg:step3}) until $k =-1$.
\end{enumerate}
After finding the values of $a_{i,k}$ in this manner, we have $D(M) = \sum_{k=0}^m a_{0,k}$.
\end{thm}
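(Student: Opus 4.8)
The plan is to show that the algorithm reconstructs, column by column, the exponents of the unique reduced proper generalized Young wall $Y$ with $\Psi(Y)=M$, and then to read off $D(M)$ from $Y$. Recall from the discussion after \eqref{FtoM} that $\Psi$ restricts to an isomorphism $\mathcal Y(\infty)\cong\MM(\infty)$, and that if $c_{i,k}$ denotes the number of $i$-colored boxes in the $k$th column of $Y$ then $\Psi(Y)=\prod_{i\in I}\prod_{k\ge0}A_{i,k}^{-c_{i,k+1}}\one$; thus the distinguished $A$-exponents of $M$ are $a_{i,k}=-c_{i,k+1}\le0$, and $D(M)=\sum_{k\ge0}a_{0,k}$ for this particular representative---the one lying in $\mathcal Y(\infty)$, on which the honest ($\delta$-graded) weight of $B(\infty)$ is available. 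The difficulty special to $n\ge2$ is that this expansion is not unique: since $\prod_{i\in I}A_{i,k}=\one$, adding a common integer to all $a_{i,k}$ for a fixed $k$ leaves $M$ unchanged but alters $\sum_k a_{0,k}$. Hence the whole content of the theorem is that steps~(\ref{alg:step1})--(\ref{alg:step4}) single out exactly the reduced-wall representative.

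First I would characterize the exponents of $Y$ by linear conditions. By Lemma~\ref{thm:3} the differences $a_{i,k}-a_{i-1,k}$ are fixed by $M$, so for each $k$ the exponents are determined up to one additive integer. Reading off the board coloring, an $i$-colored box in column $k+1$ and an $(i-1)$-colored box in column $k+2$ lie in rows congruent to $i+k+1$ modulo $n+1$, and every row reaching column $k+2$ already reaches column $k+1$; therefore $c_{i-1,k+2}\le c_{i,k+1}$, i.e.\ $a_{i,k}\le a_{i-1,k+1}$, holds for any proper wall. By the definition of a removable $\delta$, column $k+1$ carries one precisely when all of these inequalities are strict, so $Y$ being reduced means that for each $k$ at least one of them is an equality. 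With the convention $a_{i,m+1}=0$ for the empty column left of the widest one, the case $k=m$ reads ``$a_{i,m}\le0$ for all $i$ with equality for some $i$,'' which is exactly step~(\ref{alg:step2}); and since $a_{i,k}\le a_{i-1,k+1}\le0$ once the column to the left is known nonpositive, the nonpositivity constraint is automatic thereafter.

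These conditions are triangular: column $k$ is constrained only by column $k+1$, and reducedness of a column involves only it and its left neighbor. Sweeping $k=m,m-1,\dots,0$ with column $k+1$ already fixed, the largest additive integer shift keeping $a_{i,k}\le a_{i-1,k+1}$ for all $i$ exists (the shift is bounded above and the feasible set is nonempty because $Y$ itself is a solution), and maximality forces one inequality to become an equality, i.e.\ reducedness. This is precisely the maximal solution of step~(\ref{alg:step3}), initialized by step~(\ref{alg:step2}). Thus the algorithm computes the unique solution of the conditions of the previous paragraph, which the box counts of $Y$ also satisfy; the two therefore coincide, and $D(M)=\sum_{k=0}^{m}a_{0,k}$ follows. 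Equivalently, this greedy sweep is the repeated removal of $\delta$'s illustrated in the motivating example, which does not change $M$ because $\prod_{i\in I}A_{i,k}=\one$.

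The main obstacle is the combinatorial dictionary of the second paragraph: proving rigorously, from the coloring of $\mathcal B$ and the definition of a removable $\delta$, that properness of the wall yields $a_{i,k}\le a_{i-1,k+1}$ and that reducedness is exactly the tightness of one such inequality in each column. A convenient feature of the argument is that it never needs the converse---that an arbitrary integer solution of these inequalities comes from an honest proper wall---because the known existence of $Y$, together with the uniqueness established in the third paragraph, already pins the algorithm's output to $Y$. The remaining points are routine: the unique solvability asserted in step~(\ref{alg:step2}) and the existence of the maximum in step~(\ref{alg:step3}) at each stage.
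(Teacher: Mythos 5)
Your proposal is correct and takes essentially the same route as the paper's proof: both identify the algorithm's output with the $A$-exponents of the unique reduced proper generalized Young wall corresponding to $M$ under $\Psi$, using the fixed differences from Lemma~\ref{thm:3} (so each column is determined up to one additive shift) and translating properness and reducedness into the inequalities $a_{i,k}\le a_{i-1,k+1}$ with at least one equality per column, then reading $D(M)$ from the $0$-colored boxes. You are in fact somewhat more explicit than the paper in deriving the inequality from the board's coloring and in observing that the converse direction (integer solutions yield honest walls) is never needed, but this is elaboration of the same argument rather than a different one.
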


\begin{proof}
First note that in step (\ref{alg:step2}), such an integral solution to the equations of \eqref{eq:An1recursion} exists since $M \in \MM(\infty)$ so must be expressible via the $A_{i,k}$. Furthermore, given a solution $(c_i)_{i \in I}$ of integers to the system of equations, any other solution $(c_i')_{i \in I}$ of integers is given by some integral shift of the $c_i$. This can be seen by first noting that there exists an integer $t$ such that $c_0' = c_0 + t$. Then note that 
\begin{align*}
c_0 - c_n &= c_0' - c_n'\\
c_0- c_n &= (c_0+t)-c_n'\\
-c_n &= t-c_n' \\
c_n' &= c_n + t.
\end{align*}
This can be repeated to show that $c_i' = c_i + t$ for all $i \in I$. Thus, the solution to step (\ref{alg:step2}) is the integral shift of the first solution such that some value is $0$ and the rest are nonpositive.

Showing the existence and uniqueness of the solution in step (\ref{alg:step3}) is nearly identical to the argument used for step (\ref{alg:step2}). It now suffices to show that the solution obtained by this algorithm represents the correct expression for $M$ corresponding to a proper, reduced generalized Young wall.

Given any nonpositive solution to the $a_{i,m}$, if none of the values are $0$, then the corresponding generalized Young wall has a removable $\delta$. Thus, the solution here should certainly be the one given by the algorithm.

For any $a_{i,k-1}$ such that $a_{i,k-1} > a_{i-1,k}$, the corresponding generalized Young wall will not be proper. Therefore, it is certainly true that $a_{i,k-1} \leq a_{i-1,k}$. Furthermore, if $a_{i,k-1}< a_{i-1,k}$ for all $i\in I$, then the corresponding generalized Young wall has a removable $\delta$ in its $(k-1)$st row and so is not reduced. Thus, it must be the maximal solution, where for at least one $i$, $a_{i,k-1}=a_{i-1,k}$.

Since the $a_{0,k}$ values correspond to the $0$-boxes in the generalized Young wall, we have $D(M) = \sum_{k=0}^m a_{0,k}$.
\end{proof}

\begin{ex}
Note that in $\MM(\infty)$ of type $A_4^{(1)}$, 
\[
\f_0\f_1\f_3\f_0\f_4\f_0\one = A_{0,0}^{-3} A_{1,0}^{-1} A_{3,2}^{-1} A_{4,1}^{-1}\one = Y_{0,0}^{-3} Y_{0,1}^{-1} Y_{1,0}^2 Y_{1,1}^{-1} Y_{2,0} Y_{2,3} Y_{3,3}^{-1} Y_{4,1}^2\one.
\]
Applying Theorem \ref{thm:4}, note that the desired value $m$ is 2. Step \ref{alg:step2} of the algorithm says that the correct values of $a_{i,2} \ (i \in I)$ are solutions to the system
\begin{align*}
a_{0,2}-a_{4,2} &= y_{0,2} + y_{1,1} + y_{2,0} = 0,\\
a_{1,2}-a_{0,2} &= y_{1,2} + y_{2,1} + y_{3,0} = 0,\\
a_{2,2}-a_{1,2} &= y_{2,2} + y_{3,1} + y_{4,0}=0,\\
a_{3,2}-a_{2,2} &= y_{3,2} + y_{4,1} + y_{0,0}=-1,\\
a_{4,2}-a_{3,2} &= y_{4,2} + y_{0,1} + y_{1,0}=1.
\end{align*}
The general solution to this system is (for any $t \in \ZZ$) $a_{0,2}= t$, $a_{1,2} = t$, $a_{2,2} = t$, $a_{3,2} = t-1$, and $a_{4,2} = t$. Note that the maximum solution to this system with nonpositive values is $a_{3,2} = -1$ and $a_{i,2} = 0$ for all other $i$.

Similarly, for $k=1$, the values of $a_{i,1} \ (i \in I)$ are a solution to the system
\begin{align*}
a_{0,1} - a_{4,1} &= y_{0,1} + y_{1,0}=1,\\
a_{1,1} -a_{0,1} &= y_{1,1} + y_{2,0}=0,\\
a_{2,1} - a_{1,1} &= y_{2,1} + y_{3,0}=0,\\
a_{3,1} - a_{2,1} &= y_{3,1} + y_{4,0}=0,\\
a_{4,1} - a_{3,1} &= y_{4,1} + y_{0,0}=-1.
\end{align*}
The maximal solution to this system such that $a_{4,1} \leq -1$ and $a_{i,1} \leq 0$ for all other $i$ is $a_{4,1} = -1$ and $a_{i,1} = 0$ for all other $i$. 

Finally, for $k=0$, the values for $a_{i,0} \ (i \in I)$ are solutions to the system
\begin{align*}
a_{0,0}-a_{4,0} &= y_{0,0}=-3,\\
a_{1,0}-a_{0,0} &= y_{1,0}=2,\\
a_{2,0}-a_{1,0} &= y_{2,0}=1,\\
a_{3,0}-a_{2,0} &= y_{3,0}=0,\\
a_{4,0}-a_{3,0} &= y_{4,0}=0.
\end{align*}
Note that the maximal solution to this such that $a_{0,0} \leq -1$ and $a_{i,0} \leq 0$ for all other $i$ is $a_{0,0} = -3$, $a_{1,0} = -1$, $a_{2,0} = 0$, $a_{3,0} = 0$, and $a_{4,0} = 0$. 
\end{ex}

\section{The weight function in type $B_3^{(1)}$}\label{sec:typeB}

\subsection{The result for $B_3^{(1)}$}
We now consider type $B_3^{(1)}$.

\begin{lemma}\label{lemma:1}
For $M = \prod_{i \in I} \prod_{k \geq 0} Y_{i,k}^{y_{i,k}} \one = \prod_{i \in I} \prod _{k \geq 0} A_{i,k}^{a_{i,k}}\one \in \MM(\infty)$, each of the following hold for $k \geq 0$ (with the convention that $a_{i,-1}=0$ for all $i \in I$):
\begin{equation}\label{eq:lem1}
\begin{aligned}
a_{0,k} &= y_{0,k} + a_{2,k-1} - a_{0,k-1},\\
a_{1,k} &= y_{1,k} + a_{2,k-1} - a_{1,k-1},\\
a_{2,k} &= y_{2,k}  + a_{0,k} + a_{1,k} + a_{3,k-1}-a_{2,k-1},\\
a_{3,k} &= y_{3,k} + 2a_{2,k} - a_{3,k-1}.
\end{aligned}
\end{equation}
\end{lemma}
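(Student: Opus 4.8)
The plan is to follow the same strategy as in the proof of Lemma~\ref{thm:1}: since $M\in\MM(\infty)$, we already know that integers $a_{i,k}$ exist with $M=\prod_{k\ge0}\prod_{i\in I}A_{i,k}^{a_{i,k}}\one$, so it suffices to show that \emph{those} integers satisfy the four recursions in \eqref{eq:lem1}. I would obtain the recursions by expanding the product on the $A$-side into a product of the $Y$-variables and then equating, for each pair $(i,k)$, the total exponent of $Y_{i,k}$ with the given datum $y_{i,k}$.

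First I would record the variables $A_{i,k}$ explicitly in type $B_3^{(1)}$. Using the Cartan matrix of $B_3^{(1)}$ (with $C_{20}=C_{02}=C_{21}=C_{12}=C_{23}=-1$ and $C_{32}=-2$, where $2$ is the branch node adjacent to the short node $3$) together with the fixed choice of $(o_{i,j})_{i\ne j}$, a direct computation gives
\begin{align*}
A_{0,k} &= Y_{0,k}Y_{0,k+1}Y_{2,k}^{-1}, &
A_{1,k} &= Y_{1,k}Y_{1,k+1}Y_{2,k}^{-1},\\
A_{2,k} &= Y_{2,k}Y_{2,k+1}Y_{0,k+1}^{-1}Y_{1,k+1}^{-1}Y_{3,k}^{-2}, &
A_{3,k} &= Y_{3,k}Y_{3,k+1}Y_{2,k+1}^{-1}.
\end{align*}

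The key structural observation is that each subscript appearing in an $A_{j,\ell}$ is either $\ell$ or $\ell+1$, so $Y_{i,k}$ can only be produced by factors $A_{j,\ell}$ with $\ell\in\{k-1,k\}$. Consequently the exponent of $Y_{i,k}$ in $\prod_{k\ge0}\prod_{i\in I}A_{i,k}^{a_{i,k}}\one$ is a fixed $\ZZ$-linear combination of the $a_{j,k}$ and $a_{j,k-1}$, and reading off these contributions from the four displays above yields, for each $i$ and each $k\ge0$ (with the convention $a_{i,-1}=0$), one identity $y_{i,k}=(\text{exponent of }Y_{i,k})$. Each such identity rearranges directly into the corresponding line of \eqref{eq:lem1}; for instance $Y_{3,k}$ receives $+a_{3,k}$ from $A_{3,k}$, $+a_{3,k-1}$ from $A_{3,k-1}$, and $-2a_{2,k}$ from $A_{2,k}$, giving $y_{3,k}=a_{3,k}+a_{3,k-1}-2a_{2,k}$, which is exactly the fourth recursion.

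The argument is entirely bookkeeping and presents no genuine obstacle, mirroring Lemma~\ref{thm:1}; the only point requiring care is the double bond between nodes $2$ and $3$, which introduces the coefficient $-2$ on $Y_{3,k}$ in $A_{2,k}$ and is responsible for the factor of $2$ in the equations for $a_{2,k}$ and $a_{3,k}$. The boundary convention $a_{i,-1}=0$ handles the base case $k=0$ uniformly, since no factor $A_{j,-1}$ is present there.
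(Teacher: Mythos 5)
Your proposal is correct and follows essentially the same route as the paper: identify, for each $Y_{i,k}$, the finitely many factors $A_{j,\ell}$ (with $\ell\in\{k-1,k\}$) that contribute to its exponent, equate that exponent with $y_{i,k}$, and rearrange; your explicit forms of $A_{0,k}$, $A_{1,k}$, $A_{2,k}$, $A_{3,k}$ in type $B_3^{(1)}$ and the resulting four identities (including the coefficient $-2$ from $C_{32}=-2$) all check out. The paper carries out only the $Y_{0,\ell}$ case and says the rest follow similarly, so your write-up is in fact slightly more complete than the published proof.
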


\begin{proof} Consider which $A_{i,k}$ contribute $Y_{0,\ell}$ to $M$. They are exactly $A_{0,\ell}$, $A_{0,\ell-1}$, and $A_{2,\ell-1}$. Expanding these and equating powers with those of $Y_{0,\ell}$ gives
\[
y_{0,\ell} = a_{0,\ell} + a_{0,\ell-1} - a_{2,\ell-1},
\]
which gives the first desired equality. The other three follow similarly.
\end{proof}

The following lemma is straightforward, but we include it for easy reference in the arguments below.

\begin{lemma}\label{lemma:2}
For any $m \geq 0$, we have
$
\left\lfloor\frac{m}{2}\right\rfloor + \left\lfloor\frac{m+1}{2}\right\rfloor = m.
$
\end{lemma}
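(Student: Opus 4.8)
The plan is to dispose of this by a parity case analysis on $m$, which is the most direct route and suffices because the two summands behave differently according to whether $m$ is even or odd. First I would observe that exactly one of the two consecutive integers $m$ and $m+1$ is even, so exactly one of the arguments $m/2$ and $(m+1)/2$ is itself an integer while the other has fractional part $\tfrac12$. This observation is the whole engine of the proof, and everything afterward is just reading off the two floors.

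In the even case, writing $m = 2j$ for some $j \in \ZZ_{\geq 0}$, the first floor is exactly $j$ and the second equals $\floor{j + \tfrac{1}{2}} = j$, so their sum is $2j = m$. In the odd case, writing $m = 2j+1$, the first floor is $\floor{j + \tfrac{1}{2}} = j$ and the second is exactly $j+1$, giving $2j+1 = m$. Since every $m \geq 0$ falls into exactly one of these two cases, the identity holds in general.

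There is essentially no obstacle here: the statement is an elementary identity, and the only point requiring any attention is making sure the fractional-part bookkeeping is carried out correctly in each parity case. One could alternatively appeal to Hermite's identity $\floor{x} + \floor{x + \tfrac{1}{2}} = \floor{2x}$ specialized to $x = m/2$, which collapses the left-hand side to $\floor{m} = m$ in a single line, but the direct parity argument is shorter and fully self-contained, so I would present that one.
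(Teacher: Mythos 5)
Your proof is correct, and it takes a genuinely different (though equally elementary) route from the paper's. You argue by parity: writing $m = 2j$ or $m = 2j+1$ and evaluating both floors directly via fractional-part bookkeeping. The paper instead gives a case-free counting argument: it interprets $\floor{\frac{m}{2}}$ as the number of positive even integers at most $m$ and $\floor{\frac{m+1}{2}}$ as the number of positive odd integers at most $m$, so the sum counts all positive integers at most $m$, which is $m$. The paper's version buys uniformity (no case split) and a small conceptual gloss; yours buys complete mechanical transparency, and the Hermite identity $\floor{x} + \floor{x + \tfrac{1}{2}} = \floor{2x}$ you mention as an alternative would subsume both in one line. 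All three arguments are sound, and nothing in either proof depends on which one is used downstream in Lemma \ref{thm:5}.
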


\begin{proof}
Note that $\left \lfloor \frac{m}{2} \right \rfloor$ is the number of positive even integers less than or equal to $m$. Similarly, $\left \lfloor \frac{m+1}{2} \right \rfloor$ is the number of positive odd integers less than or equal to $m$. Summing these values gives the number of integers less than or equal to $m$, which is exactly $m$. 
\end{proof}

\begin{lemma}\label{thm:5}
Given $M = \prod_{i \in I} \prod_{m \geq 0} Y_{i,m}^{y_{i,m}}\one \in \MM(\infty)$,  the solution to  $M =\prod_{i \in I} \prod _{m \geq 0} A_{i,m}^{a_{i,m}}\one$ is
\begin{align*}
a_{0,m} &= \sum_{k=0}^m \left(2\left\lfloor\frac{k}{2}\right\rfloor - \left\lfloor\frac{k-1}{2}\right\rfloor\right) y_{0,m-k} + \left\lfloor\frac{k+1}{2}\right\rfloor y_{1,m-k} + k y_{2,m-k} + \left\lfloor\frac{k}{2}\right\rfloor y_{3,m-k},\\
a_{1,m} &= \sum_{k=0}^m  \left\lfloor\frac{k+1}{2}\right\rfloor y_{0,m-k} + \left(2\left\lfloor\frac{k}{2}\right\rfloor - \left\lfloor\frac{k-1}{2}\right\rfloor\right) y_{1,m-k} + k y_{2,m-k} + \left\lfloor\frac{k}{2}\right\rfloor y_{3,m-k},\\
a_{2,m} &= \sum_{k=0}^m (k+1) y_{0,m-k} + (k+1) y_{1,m-k} + (2k+1) y_{2,m-k} + k y_{3,m-k},\\
a_{3,m} &= \sum_{k=0}^m 2\left\lfloor\frac{k+2}{2}\right\rfloor y_{0,m-k} + 2\left\lfloor\frac{k+2}{2}\right\rfloor y_{1,m-k} + (2k+2) y_{2,m-k} + \left(2\left\lfloor\frac{k}{2}\right\rfloor + 1\right) y_{3,m-k}.
\end{align*}
\end{lemma}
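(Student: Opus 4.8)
The plan is to prove all four formulas simultaneously by induction on the column index $m$, using the recurrences \eqref{eq:lem1} of Lemma \ref{lemma:1}. The structural point is that \eqref{eq:lem1} produces the quantities in a definite order: $a_{0,m}$ and $a_{1,m}$ depend only on column-$(m-1)$ data (namely $a_{2,m-1}$, $a_{0,m-1}$, $a_{1,m-1}$); once these are in hand, $a_{2,m}$ is determined by $a_{0,m}$, $a_{1,m}$ together with $a_{3,m-1}$ and $a_{2,m-1}$; and finally $a_{3,m}$ follows from $a_{2,m}$ and $a_{3,m-1}$. Accordingly, within the inductive step I would establish the closed forms in the order $a_{0,m}$, $a_{1,m}$, $a_{2,m}$, $a_{3,m}$, feeding in at each stage both the inductive hypothesis at column $m-1$ and the formulas just proved at column $m$. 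A useful simplification is the symmetry of \eqref{eq:lem1}: interchanging the pairs $(y_{0,k},a_{0,k})$ and $(y_{1,k},a_{1,k})$ leaves the whole system invariant, reflecting the automorphism of the $B_3^{(1)}$ Dynkin diagram that exchanges the two nodes $0$ and $1$ attached to node $2$. Since the claimed formulas for $a_{0,m}$ and $a_{1,m}$ are likewise interchanged by this swap, it suffices to verify the $a_{0,m}$ formula and then invoke symmetry for $a_{1,m}$.

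For the base case $m=0$ I would use the convention $a_{i,-1}=0$ and check directly that each formula collapses to its $k=0$ term, recovering $a_{0,0}=y_{0,0}$, $a_{1,0}=y_{1,0}$, $a_{2,0}=y_{0,0}+y_{1,0}+y_{2,0}$, and $a_{3,0}=2y_{0,0}+2y_{1,0}+2y_{2,0}+y_{3,0}$, in agreement with \eqref{eq:lem1}. For the inductive step, after substituting the inductive hypotheses into the relevant recurrence, the verification reduces to matching the coefficient of each $y_{j,m-k}$ on the two sides. The $k=0$ contribution comes solely from the explicit $y_{i,m}$ term in \eqref{eq:lem1} and is immediate; for $k\ge 1$, the coefficient of $y_{j,m-k}$ is assembled from the column-$(m-1)$ formulas by the reindexing $y_{j,(m-1)-\ell}=y_{j,m-k}$, which forces $\ell=k-1$. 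Each such match then becomes an elementary identity among floor functions.

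The main obstacle is the bookkeeping of these floor-function identities, of which there are numerous instances across the four quantities and four variable types. For example, matching the coefficient of $y_{0,m-k}$ in $a_{0,m}$ requires
\[
k - 2\floor{\tfrac{k-1}{2}} + \floor{\tfrac{k-2}{2}} = 2\floor{\tfrac{k}{2}} - \floor{\tfrac{k-1}{2}},
\]
which one checks by splitting into the cases $k$ even and $k$ odd. Here Lemma \ref{lemma:2}, in the equivalent forms $\floor{(k+1)/2}=k-\floor{k/2}$ and $\floor{(k-1)/2}=(k-1)-\floor{k/2}$, is exactly the tool that linearizes such expressions: substituting it rewrites every floor as a multiple of $\floor{k/2}$, after which the coefficients can be compared as ordinary integers. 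The most delicate case is the matching for $a_{2,m}$, where the contributions of $a_{0,m}$, $a_{1,m}$, $a_{3,m-1}$, and $-a_{2,m-1}$ must combine to the clean coefficients $k+1$, $k+1$, $2k+1$, and $k$ on $y_{0,m-k}$, $y_{1,m-k}$, $y_{2,m-k}$, and $y_{3,m-k}$; I would handle it by first reducing all floors to $\floor{k/2}$ via Lemma \ref{lemma:2} and only then collecting terms, at which point the floor dependence cancels and the asserted coefficients emerge.
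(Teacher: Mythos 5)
Your proposal is correct and follows essentially the same route as the paper's proof: induction on $m$ via the recurrences \eqref{eq:lem1} of Lemma \ref{lemma:1}, with the same base case, the same reindexing $\ell = k-1$ to match the coefficient of each $y_{j,m-k}$, and the same reduction of each match to floor-function identities dispatched by Lemma \ref{lemma:2}. Your one deviation---deducing the $a_{1,m}$ formula from the $a_{0,m}$ case via the $0\leftrightarrow 1$ symmetry of the system---is only a mild streamlining of the paper's observation that the four identities needed for $a_{1,m+1}$ coincide with those already proved for $a_{0,m+1}$.
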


\begin{proof}
We proceed by induction on $m$. First, note that the base case of $a_{0,0} = y_{0,0}$, $a_{1,0} = y_{1,0}$, $a_{2,0} = y_{0,0} + y_{1,0} + y_{2,0}$ and $a_{3,0} = y_{3,0} + 2y_{0,0} + 2 y_{1,0} + 2y_{2,0}$ each hold by the Lemma~\ref{lemma:1}.

Now assume for the sake of induction that for a fixed $m \in \ZZ_{\geq 0}$ the result holds for $a_{i,m}$ for all $i \in \{0,1,2,3\}$. Then, by \eqref{eq:lem1}, we have
\begin{align*}
a_{0,m+1} =& \ y_{0,m+1} + \sum_{k=0}^m (k+1) y_{0,m-k} + (k+1) y_{1,m-k} + (2k+1) y_{2,m-k} + k y_{3,m-k}\\
&\, - \sum_{k=0}^m \left(2\left\lfloor\frac{k}{2}\right\rfloor - \left\lfloor\frac{k-1}{2}\right\rfloor\right) y_{0,m-k} + \left\lfloor\frac{k+1}{2}\right\rfloor y_{1,m-k} + k y_{2,m-k} + \left\lfloor\frac{k}{2}\right\rfloor y_{3,m-k}.
\end{align*}
Since the desired identity is (by evaluating the $k=0$ term and reindexing the sum)
\begin{align*}
    a_{0,m+1} =&\ y_{0,m+1} + \sum_{k=0}^{m}\left(2\left\lfloor\frac{k+1}{2}\right\rfloor - \left\lfloor\frac{k}{2}\right\rfloor\right)y_{0,m-k} \\ 
    &+\ \left\lfloor\frac{k+2}{2}\right\rfloor y_{1,m-k} + (k+1)y_{2,m-k} + \left\lfloor\frac{k+1}{2}\right\rfloor y_{2,m-k},
\end{align*}
it suffices to prove the following four identities:
\begin{subequations}
\begin{align}
2\left\lfloor\frac{k+1}{2}\right\rfloor - \left\lfloor\frac{k}{2}\right\rfloor &=k+1-2\left\lfloor\frac{k}{2}\right\rfloor+\left\lfloor\frac{k-1}{2}\right\rfloor, \label{eq:pf43a}\\
\left\lfloor\frac{k+2}{2}\right\rfloor &= k+1-\left\lfloor\frac{k+1}{2}\right\rfloor,\label{eq:pf43b}\\
k+1 &= 2k+1-k,\label{eq:pf43c}\\
\left\lfloor\frac{k+1}{2}\right\rfloor &= k - \left\lfloor\frac{k}{2}\right\rfloor.\label{eq:pf43d}
\end{align}
\end{subequations}
The relation \eqref{eq:pf43c} holds trivially and both \eqref{eq:pf43b} and \eqref{eq:pf43d} are each immediate results from Lemma \ref{lemma:2}. To prove \eqref{eq:pf43a}, note that
\begin{align*}
2\left\lfloor\frac{k+1}{2}\right\rfloor - \left\lfloor\frac{k}{2}\right\rfloor + 2\left\lfloor\frac{k}{2}\right\rfloor - \left\lfloor\frac{k-1}{2}\right\rfloor &= 2\left(\left\lfloor \frac{k+1}{2}\right\rfloor + \left\lfloor \frac{k}{2}\right\rfloor\right) - \left(\floor{\frac{k}{2}} + \floor{\frac{k-1}{2}}\right)\\
&= 2k - (k-1)\\
&=k+1,
\end{align*}
which is equivalent to the first statement. Thus, the result holds for $a_{0,m+1}$.

Now consider similarly $a_{1,m+1}$. Using a method identical to what was used for $a_{0,m+1}$, it can be seen that this case holds if the following four identities hold:
\begin{subequations}
\begin{align}
\left\lfloor\frac{k+2}{2}\right\rfloor &= k+1-\left\lfloor\frac{k+1}{2}\right\rfloor, \label{eq:pf43a2}\\
2\left\lfloor\frac{k+1}{2}\right\rfloor - \left\lfloor\frac{k}{2}\right\rfloor &= k+1-2\left\lfloor\frac{k}{2}\right\rfloor+\left\lfloor\frac{k-1}{2}\right\rfloor,\label{eq:pf43b2}\\
k+1 &= 2k+1-k,\label{eq:pf43c2}\\
\left\lfloor\frac{k+1}{2}\right\rfloor &= k - \left\lfloor\frac{k}{2}\right\rfloor.\label{eq:pf43d2}
\end{align}
\end{subequations}
Since each of these are identical to an identity used to prove the result for $a_{0,m+1}$, the result holds for $a_{1,m+1}$ as well.

Next, to show that the theorem holds for $a_{2,m+1}$, note first that by Lemma~\ref{lemma:1}:
\begin{align*}
&a_{2,m+1} = y_{2,m+1} \\ &+ \sum_{k=0}^{m+1} \left(2\left\lfloor\frac{k}{2}\right\rfloor - \left\lfloor\frac{k-1}{2}\right\rfloor\right) y_{0,m+1-k} + \left\lfloor\frac{k+1}{2}\right\rfloor y_{1,m+1-k} + k y_{2,m+1-k} + \left\lfloor\frac{k}{2}\right\rfloor y_{3,m+1-k}\\
&+\sum_{k=0}^{m+1}  \left\lfloor\frac{k+1}{2}\right\rfloor y_{0,m+1-k} + \left(2\left\lfloor\frac{k}{2}\right\rfloor - \left\lfloor\frac{k-1}{2}\right\rfloor\right) y_{1,m+1-k} + k y_{2,m+1-k} + \left\lfloor\frac{k}{2}\right\rfloor y_{3,m+1-k}\\
&+\sum_{k=0}^m 2\left\lfloor\frac{k+2}{2}\right\rfloor y_{0,m-k} + 2\left\lfloor\frac{k+2}{2}\right\rfloor y_{1,m-k} + (2k+2) y_{2,m-k} + \left(2\left\lfloor\frac{k}{2}\right\rfloor + 1\right) y_{3,m-k}\\
&-\sum_{k=0}^m (k+1) y_{0,m-k} + (k+1) y_{1,m-k} + (2k+1) y_{2,m-k} + k y_{3,m-k}.
\end{align*}
Pulling out the $k=0$ term of the first two sums and reindexing so that each of the sums match gives
\begin{align*}
&a_{2,m+1} = y_{0,m+1}+y_{1,m+1}+y_{2,m+1} \\
&+\sum_{k=0}^m \left(2\floor{\frac{k+1}{2}} - \floor{\frac{k}{2}}\right)y_{0,m-k} + \floor{\frac{k+2}{2}}y_{1,m-k} + (k+1)y_{2,m-k} + \floor{\frac{k+1}{2}}y_{2,m-k}\\
&+ \sum_{k=0}^m  \floor{\frac{k+2}{2}}y_{0,m-k}+\left(2\floor{\frac{k+1}{2}} - \floor{\frac{k}{2}}\right)y_{1,m-k} + (k+1)y_{2,m-k} + \floor{\frac{k+1}{2}}y_{2,m-k}\\
&+\sum_{k=0}^m 2\left\lfloor\frac{k+2}{2}\right\rfloor y_{0,m-k} + 2\left\lfloor\frac{k+2}{2}\right\rfloor y_{1,m-k} + (2k+2) y_{2,m-k} + \left(2\left\lfloor\frac{k}{2}\right\rfloor + 1\right) y_{3,m-k}\\
&-\sum_{k=0}^m (k+1) y_{0,m-k} + (k+1) y_{1,m-k} + (2k+1) y_{2,m-k} + k y_{3,m-k}.
\end{align*}
Since the desired result is equivalent to (by evaluating the $k=0$ term and reindexing the sum)
\begin{align*}
a_{2,m+1} =&\ y_{0,m+1} + y_{1,m+1} + y_{2,m+1} \\ & + \sum_{k=0}^m (k+2) y_{0,m-k} + (k+2) y_{1,m-k} + (2k+3) y_{2,m-k} + (k+1) y_{3,m-k},
\end{align*}
the four identities needed to show that the result holds for $a_{2,m+1}$ are
\begin{subequations}
\begin{align}
k+2 &= 2\floor{\frac{k+1}{2}}-\floor{\frac{k}{2}} + \floor{\frac{k+2}{2}} + 2 \floor{\frac{k+2}{2}} - k - 1,\label{eq:pf43a3}\\
k+2 &= \floor{\frac{k+2}{2}} + 2\floor{\frac{k+2}{2}} - \floor{\frac{k}{2}} + 2\floor{\frac{k+2}{2}}-k-1,\label{eq:pf43b3}\\
2k+3 &= k+1+k+1+2k+2-2k-1,\label{eq:pf43c3}\\
k+1 &= \floor{\frac{k+1}{2}} + \floor{\frac{k+1}{2}} + 2\floor{\frac{k}{2}} + 1 -k.\label{eq:pf43d3}
\end{align}
\end{subequations}
The identity \eqref{eq:pf43c3} is trivial and \eqref{eq:pf43d3} is a direct application of Lemma~\ref{lemma:2}. To show \eqref{eq:pf43a3} and \eqref{eq:pf43b3} (which are both the same statement written different ways), note that
\begin{align*}
&2\floor{\frac{k+1}{2}}-\floor{\frac{k}{2}} + \floor{\frac{k+2}{2}} + 2 \floor{\frac{k+2}{2}} - k - 1\\
 &= 2\left(\floor{\frac{k+1}{2}} + \floor{\frac{k+2}{2}}\right) -\floor{\frac{k}{2}} + \floor{\frac{k+2}{2}} - k-1\\
&= 2k+2-\floor{\frac{k}{2}} + \floor{\frac{k+2}{2}} - k-1\\
 &= k+1-\floor{\frac{k}{2}} + \floor{\frac{k+2}{2}}.
\end{align*}
Thus, proving the statement reduces to showing that $-\floor{\frac{k}{2}} + \floor{\frac{k+2}{2}} = 1$. By Lemma~\ref{lemma:2}, $-\floor{\frac{k}{2}} = -k + \floor{\frac{k+1}{2}}$, so the statement further reduces to showing that $-k + \floor{\frac{k+1}{2}} + \floor{\frac{k+1}{2}} = 1$, which can be shown by another application of Lemma~\ref{lemma:2}. Therefore, each of the four desired identities holds, so the result holds for $a_{2,m+1}$.

Finally, to show the result for $a_{3,m+1}$, an identical process to the one used for $a_{2,m+1}$ can be used to see that the four desired identities are
\begin{subequations}
\begin{align}
2\floor{\frac{k+3}{2}} &= 2(k+2) - 2\floor{\frac{k+2}{2}},\label{eq:pf43a4}\\
2\floor{\frac{k+3}{2}} &= 2(k+2) - 2\floor{\frac{k+2}{2}},\label{eq:pf43b4}\\
2k+4 &= 2(2k+3) - 2k+2,\label{eq:pf43c4}\\
2\floor{\frac{k+1}{2}} +1 &= 2(k+1) - 2\floor{\frac{k}{2}}-1.\label{eq:pf43d4}
\end{align}
\end{subequations}
However, \eqref{eq:pf43c4} is trivial and each of the remaining statements are a straightforward applications of Lemma~\ref{lemma:2}, so each of these statements hold, and therefore so does the desired result for $a_{3,m+1}$.

Thus, by induction, the theorem holds.
\end{proof}

Note that Lemma~\ref{thm:5} shows that the $a_{i,k}$ are uniquely determined.  This gives the following theorem.

\begin{thm}\label{prop:2}
Let $M = \prod_{i\in I} \prod_{k \geq 0} Y_{i,k}^{y_{i,k}} \one\in \MM(\infty)$. Then 
\[
D(M) = 
\sum_{m \geq 0}\left( \sum_{k=0}^m \left(2\left\lfloor\frac{k}{2}\right\rfloor - \left\lfloor\frac{k-1}{2}\right\rfloor\right) y_{0,m-k} + \left\lfloor\frac{k+1}{2}\right\rfloor y_{1,m-k} + k y_{2,m-k} + \left\lfloor\frac{k}{2}\right\rfloor y_{3,m-k} \right).
\]
\end{thm}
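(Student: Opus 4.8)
The plan is to obtain Theorem \ref{prop:2} as a direct corollary of Lemma \ref{thm:5} together with the structural observation recorded just after Problem \ref{prob:main}. Recall from that discussion that, since each of $\theta$ and $\alpha_i$ $(i \in I\setminus\{0\})$ lies in $\ZZ\Lambda_0 \oplus \cdots \oplus \ZZ\Lambda_n$, whenever $M$ is written uniquely as $\prod_{i\in I}\prod_{k\ge 0} A_{i,k}^{a_{i,k}}\one$, the coefficient of $\delta$ in $\wt(M)$ — that is, $D(M)$ — is exactly $\sum_{k\ge 0} a_{0,k}$. Thus the whole task reduces to summing the exponents $a_{0,k}$ over all $k$, and the substance of the computation has already been carried out in the inductive verification of Lemma \ref{thm:5}.

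First I would confirm that the hypotheses of the ``$D(M)=\sum_{k\ge 0}a_{0,k}$'' observation are met, which is precisely the uniqueness statement flagged immediately before the theorem: Lemma \ref{thm:5} supplies an explicit closed form for every $a_{i,k}$ in terms of the data $(y_{i,k})$, so the exponents are uniquely determined. (This is the point where type $B_3^{(1)}$ differs from type $A_n^{(1)}$: there the relation $\prod_{i\in I}A_{i,k}=\one$ forces the $a_{i,k}$ to be determined only up to a column-wise global shift, whereas here the recursion of Lemma \ref{lemma:1} solves uniquely for each $a_{i,k}$.) I would also note that, because $M \in \MM(\infty)$ is by definition a product of finitely many $A_{i,k}$, only finitely many $a_{i,k}$ are nonzero; hence $\sum_{m\ge 0} a_{0,m}$ is a finite sum and no convergence question arises.

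Then I would simply substitute the closed form
\[
a_{0,m} = \sum_{k=0}^m \left(2\left\lfloor\tfrac{k}{2}\right\rfloor - \left\lfloor\tfrac{k-1}{2}\right\rfloor\right) y_{0,m-k} + \left\lfloor\tfrac{k+1}{2}\right\rfloor y_{1,m-k} + k\, y_{2,m-k} + \left\lfloor\tfrac{k}{2}\right\rfloor y_{3,m-k}
\]
from Lemma \ref{thm:5} into $D(M) = \sum_{m\ge 0} a_{0,m}$, which reproduces the claimed expression verbatim. I do not expect any genuine obstacle at this stage, since all of the floor-function bookkeeping was discharged in proving Lemma \ref{thm:5}; the only point requiring care is the verification that the uniqueness and finite-support hypotheses hold, both of which follow from membership $M \in \MM(\infty)$ and from the explicit formulas of Lemma \ref{thm:5}.
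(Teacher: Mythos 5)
Your proposal is correct and matches the paper's own treatment: the paper gives no separate proof, deriving the theorem immediately from Lemma~\ref{thm:5} (which it notes shows the $a_{i,k}$ are uniquely determined) together with the observation after Problem~\ref{prob:main} that $D(M)=\sum_{k\ge 0}a_{0,k}$ whenever the expression $M=\prod_{i,k}A_{i,k}^{a_{i,k}}\one$ is unique. Your added remarks on finite support and on the contrast with type $A_n^{(1)}$ (where $\prod_{i\in I}A_{i,k}=\one$ for $n\ge 2$ destroys uniqueness) are accurate and consistent with the paper's discussion.
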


\begin{ex}
Note that $\f_0 \f_1 \f_2 \f_3\mathbf 1 = Y_{0,3}^{-1} Y_{1,3}^{-1} Y_{2,2} Y_{3,0}^{-1} Y_{3,1}\one$ and
\[
\wt(\f_0 \f_1 \f_2 \f_3\one) = -\Lambda_0 - \Lambda_1 + \Lambda_2 - \delta.
\]
Indeed, consider the values of $a_{0,m}$ for each $m$. For $m \geq 3$, $a_{0,m} = 0$ since the largest nonzero $y_{i,k}$ is $y_{1,3}$. Now, applying Lemma~\ref{thm:5} (and ignoring the values for which $y_{i,k}=0$) shows that
\begin{align*}
a_{0,0} &= \floor{\frac{0}{2}}y_{3,0} = 0,\\
a_{1,0} &= \floor{\frac{0}{2}}y_{3,1} + \floor{\frac{1}{2}}y_{3,0} = 0,\\
a_{2,0} &= 0y_{2,2} + \floor{\frac{1}{2}}y_{3,1} + \floor{\frac{2}{2}}y_{3,0} = -1.
\end{align*}
Thus, by adding these up, Theorem~\ref{prop:2} implies $D(\f_0 \f_1 \f_2 \f_3\one) = -1$, as expected.
\end{ex}

\subsection{Comments on type $B_4^{(1)}$}

In an attempt to find a similar result for the $U_q(B_4^{(1)}$)-crystal $\MM(\infty)$, note that the analogous defining identities to Lemma \ref{lemma:1} are
\begin{align*}
a_{0,m} &= y_{0,m} + a_{2,m-1} - a_{0, m-1},\\
a_{1,m} &= y_{1,m} + a_{2,m-1} - a_{1, m-1},\\
a_{2,m} &= y_{2,m} + a_{0,m} + a_{1,m} + a_{3,m-1} - a_{2,m-1},\\
a_{3,m} &= y_{3,m} + a_{2,m} + a_{4,m-1} - a_{3,m-1},\\
a_{4,m} &= y_{4,m} + 2a_{3,m} - a_{4,m-1}.
\end{align*}
Note that, if
\[
a_{0,m} = \sum_{k=0}^m a_k y_{0,m-k} +  b_k y_{1,m-k} + c_k y_{2,m-k} +  d_k y_{3,m-k} + e_k y_{4,m-k}
\]
(and given similar recurrence identities to each $a_{i,m}$), the first $k$ terms of each sequence can be manually computed. This can be done by first noting that 
\begin{align*}
a_{0,0} &= y_{0,0}, &a_{3,0} &= y_{0,0} + y_{1, 0} + y_{2,0} + y_{3,0}, \\ 
a_{1,0} &= y_{1,0},& a_{4, 0} &= 2y_{0,0} + 2y_{1,0} + 2y_{2,0} + 2y_{3,0} + y_{4,0},\\
a_{2,0} &= y_{0,0} + y_{1,0} + y_{2,0}.
\end{align*}
Then, the known first terms of each sequence can be plugged into the analogous Lemma~\ref{lemma:1} identities to generate each coefficient. The following code can be used in \sage\ \cite{Sage} to compute the first 21 values of the sequences $(a_k)_{k=1}^\infty$ and $(b_k)_{k=1}^\infty$: 
\begin{lstlisting}
sage: def coefficientsB4(n):
....:     a = vector([1,0,0,0,0])
....:     b = vector([0,1,0,0,0])
....:     c = vector([1,1,1,0,0])
....:     d = vector([1,1,1,1,0])
....:     e = vector([2,2,2,2,1])
....:     print['k=',0, 'a_k=', a[0], 'b_k=',a[1]]
....:     for i in range(n):
....:         a = c-a
....:         b = c-b
....:         c = a+b-c+d
....:         d = c+e-d
....:         e = 2*d-e
....:         print ['k=',i+1, 'a_k=', a[0], 'b_k=',a[1]]
....:      
sage: coefficientsB4(20)
\end{lstlisting} 
\begin{align*}
(a_k)_{k=0}^{20} &= (1,0,1,1,2,1,3,2,3,3,4,3,5,4,5,5,6,5,7,6,7), \\
(b_k)_{k=0}^{20} &= (0,1,0,2,1,2,2,3,2,4,3,4,4,5,4,6,5,6,6,7,6).
\end{align*}
Note that each of these sequences is not as simple as the sequences needed for $B_3^{(1)}$. Therefore, while the same method of finding sequences that generate the coefficients may work here, it is not immediately apparent how they would do so. In particular, the Online Encyclopedia of Integer Sequences \cite{OEIS} notes that these first terms of $a_k$ are consistent with the power series expansion of $\frac{1+x^4}{(1-x^2)(1-x^3)}$. However, the sequence $(b_k)_{k=0}^{20}$ was not recognized by the Online Encyclopedia of Integer Sequences. 

\section{Irreducible highest weight crystals}\label{sec:hw}

Define the $\MM$ to be the set of all monomials of the form
\[
M = \prod_{i \in I} \prod_{k \geq 0} Y_{i,k}^{y_{i,k}},
\]
where $y_{i,k} \in \ZZ$ and $y_{i,k} = 0$ for all but finitely many $k$.  The differences between $\MM$ and $\hM$ as sets is the inclusion of the variable $\one$ in $\hM$.  the definition of $\varepsilon_i$, and the definition of $\f_i$.  A crystal structure may be defined on $\MM$ using the same operations from Equation \eqref{eq:mon_cry_ops}, except replacing $\varepsilon_i(M)$ in \eqref{eq:oldep} with 
\[
\varepsilon_i(M) = \max\left\{ - \sum_{j>k} y_{i,j} : k \in \ZZ \right\}
\]
and replacing the definition of $\f_i$ in \eqref{eq:infop} by
\[
\f_i M = \begin{cases} 0 & \text{ if } \varphi_i(M) = 0, \\
A_{i,k_f}^{-1}M & \text{ if } \varphi_i(M) >0, \end{cases}
\]
Kashiwara~\cite{Kash:03} proved that if $M \in \MM$ is a monomial of weight $\lambda$ such that $\e_iM = 0$ for all $i\in I$, then the connected component of $\MM$ containing $M$ is isomorphic to the irreducible highest weight crystal $B(\lambda)$.  However, just as in the case of $B(\infty)$ above, if $\g$ is of affine type, then the two crystals are isomorphic as $U_q'(\g)$-crystals rather than $U_q(\g)$-crystals.

For consistency, if $\lambda = \sum_{i\in I} p_i \Lambda_i$ is a dominant integral weight, define $H_{\lambda} = \prod_{i \in I} Y_{i,0}^{p_i}$.  Direct calculations show that $\e_iH_\lambda =0 $ for all $i\in I$ and that $\wt(H_\lambda) = \lambda$.  Henceforth, denote the connected component of $\MM$ containing $H_\lambda$ by $\MM(\lambda)$.  Moreover, the morphism $\MM(\lambda) \lhook\joinrel\longrightarrow \MM(\infty)\otimes T_\lambda$ defined by $M \mapsto H_\lambda^{-1}M\one \otimes t_\lambda$ is a full crystal embedding.

\begin{ex}
Consider the realization $\MM(2\Lambda_1)$ of the irreducible highest weight crystal $B(2\Lambda_1)$ in type $A_2^{(1)}$.  Choose $Y_{1,0}^2$ to be the monomial of weight $2\Lambda_1$ to generate this crystal.  Then 
\[
M = \f_2\f_0\f_1 Y_{1,0}^2 = Y_{1,0}Y_{1,3}Y_{2,0}Y_{2,3}^{-1}.
\]
Using the crystal axioms, we have $\wt(M) = 2\Lambda_1-\delta$, but using the definition of the weight function for Nakajima monomials we get $\wt(M) = 2\Lambda_1$.  In this example, there are no variables of the form $Y_{0,k}$ in the expression for $M$.  However, 
\[
Y_{1,0}^{-2}M\one = Y_{1,0}^{-1}Y_{1,3}Y_{2,0}Y_{2,3}^{-1}\one = A_{0,1}^{-1} A_{1,0}^{-1} A_{2,2}^{-1}\one,
\]
which has weight $-\delta$ in $\MM(\infty)$; that is, $\wt(Y_{1,0}^{-2}M\one \otimes t_{2\Lambda_1}) = 2\Lambda_1 - \delta$.
\end{ex}

\begin{prop}
Let $\lambda$ be a dominant integral weight for $\g$ of affine type.  If $D\colon \MM(\infty) \longrightarrow \ZZ$ is the map from Problem \ref{prob:main}, then $\wt\colon \MM(\lambda) \longrightarrow P$ is defined by
\[
\wt(M) = \sum_{i\in I} \Bigl( \sum_{k\ge 0} y_{i,k} \Bigr) \Lambda_i + D(H_\lambda^{-1}M\one)\delta,
\]
where $M = \prod_{i \in I} \prod_{k \geq 0} Y_{i,k}^{y_{i,k}}$.
\end{prop}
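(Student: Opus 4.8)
The plan is to transport the affine weight from $\MM(\infty)\otimes T_\lambda$ back to $\MM(\lambda)$ along the full crystal embedding $\iota\colon \MM(\lambda)\lhook\joinrel\longrightarrow \MM(\infty)\otimes T_\lambda$, $M\mapsto H_\lambda^{-1}M\one\otimes t_\lambda$, recorded just above the statement. Since $\iota$ is a crystal morphism it preserves weight, so $\wt(M)=\wt(H_\lambda^{-1}M\one\otimes t_\lambda)$; and by the tensor rule $\wt(b\otimes t_\lambda)=\wt(b)+\lambda$ established in the discussion of $T_\lambda$, this equals $\wt(H_\lambda^{-1}M\one)+\lambda$, where the right-hand weight is now computed inside $\MM(\infty)$.

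Next I would unwind the $Y$-variable exponents of $H_\lambda^{-1}M\one$. Writing $\lambda=\sum_{i\in I}p_i\Lambda_i$ and $H_\lambda=\prod_{i\in I}Y_{i,0}^{p_i}$, multiplying $M=\prod_{i\in I}\prod_{k\ge0}Y_{i,k}^{y_{i,k}}$ by $H_\lambda^{-1}$ only alters the exponent of each $Y_{i,0}$, lowering it by $p_i$. Calling the resulting exponents $\widetilde{y}_{i,k}$, we have $\widetilde{y}_{i,0}=y_{i,0}-p_i$ and $\widetilde{y}_{i,k}=y_{i,k}$ for $k\ge1$, whence $\sum_{k\ge0}\widetilde{y}_{i,k}=\bigl(\sum_{k\ge0}y_{i,k}\bigr)-p_i$.

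Then I would apply the weight formula of Problem~\ref{prob:main} to the element $H_\lambda^{-1}M\one\in\MM(\infty)$, whose $\delta$-coefficient is by definition $D(H_\lambda^{-1}M\one)$:
\[
\wt(H_\lambda^{-1}M\one)=\sum_{i\in I}\Bigl(\sum_{k\ge0}\widetilde{y}_{i,k}\Bigr)\Lambda_i+D(H_\lambda^{-1}M\one)\delta=\sum_{i\in I}\Bigl(\sum_{k\ge0}y_{i,k}\Bigr)\Lambda_i-\lambda+D(H_\lambda^{-1}M\one)\delta.
\]
Adding $\lambda$ to both sides, as dictated by the first paragraph, cancels the $-\lambda$ term and produces exactly the claimed formula.

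The argument is essentially bookkeeping, so I do not anticipate a deep obstacle; the one point that must be checked with care is that $\iota$ respects the \emph{full} affine weight, including the $\delta$-component, rather than merely the classical weight in $P_{\mathrm{cl}}=P/\ZZ\delta$. This is guaranteed because $\iota$ is a genuine crystal morphism and because $D$ on $\MM(\infty)$ was constructed precisely to record the $\delta$-coefficient of the affine weight, as set up in Problem~\ref{prob:main} and computed in Sections~\ref{sec:typeA}--\ref{sec:typeB}. It also uses implicitly that $H_\lambda^{-1}M\one$ genuinely lies in $\MM(\infty)$, which is part of the content of the embedding statement and hence may be assumed.
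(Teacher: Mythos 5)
Your proposal is correct and follows the same route as the paper, whose proof consists precisely of the one line ``Embed $\MM(\lambda) \lhook\joinrel\longrightarrow \MM(\infty)\otimes T_\lambda$ via $M \mapsto H_\lambda^{-1}M\one\otimes t_\lambda$''; you have simply written out the routine bookkeeping (the shift $\wt(b\otimes t_\lambda)=\wt(b)+\lambda$ and the exponent change $\widetilde{y}_{i,0}=y_{i,0}-p_i$) that the paper leaves implicit. Your explicit check that the $-\lambda$ from $H_\lambda^{-1}$ cancels the $+\lambda$ from $T_\lambda$ is exactly the verification the paper's terse proof expects of the reader.
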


\begin{proof}
Embed $\MM(\lambda) \lhook\joinrel\longrightarrow \MM(\infty) \otimes T_{\lambda}$ using the map $M \mapsto H_\lambda^{-1}M\one \otimes t_\lambda$.
\end{proof}

\begin{acknowledgements}
The authors would like to thank Sidney Graham and Meera Mainkar for their comments on an earlier version of this manuscript which was the first author's Master's thesis.  The authors would also like to thank Jeong-Ah Kim, Kyu-Hwan Lee, Travis Scrimshaw, and Dong-Uy Shin for valuable conversations, detailed comments, and encouragement.  
\end{acknowledgements}

\bibliography{monomial_affine_weights}{}

\providecommand{\bysame}{\leavevmode\hbox to3em{\hrulefill}\thinspace}
\begin{thebibliography}{10}

\bibitem{Sage}
The~Sage Developers, \emph{{S}age {M}athematics {S}oftware ({V}ersion 7.6)},
  The Sage Development Team, 2017, \url{http://www.sagemath.org}.

\bibitem{HK:02}
Jin Hong and Seok-Jin Kang, \emph{Introduction to quantum groups and crystal
  bases}, Graduate Studies in Mathematics, vol.~42, American Mathematical
  Society, Providence, RI, 2002.

\bibitem{Kac:90}
Victor~G. Kac, \emph{Infinite-dimensional {L}ie algebras}, third ed., Cambridge
  University Press, Cambridge, 1990.

\bibitem{KKS:07}
Seok-Jin Kang, Jeong-Ah Kim, and Dong-Uy Shin, \emph{Modified {N}akajima
  monomials and the crystal {$B(\infty)$}}, J. Algebra \textbf{308} (2007),
  no.~2, 524--535.

\bibitem{Kash:91}
Masaki Kashiwara, \emph{On crystal bases of the {$q$}-analogue of universal
  enveloping algebras}, Duke Math. J. \textbf{63} (1991), no.~2, 465--516.

\bibitem{Kash:93}
\bysame, \emph{The crystal base and {L}ittelmann's refined {D}emazure character
  formula}, Duke Math. J. \textbf{71} (1993), no.~3, 839--858.

\bibitem{Kash:03}
\bysame, \emph{Realizations of crystals}, Combinatorial and geometric
  representation theory ({S}eoul, 2001), Contemp. Math., vol. 325, Amer. Math.
  Soc., Providence, RI, 2003, pp.~133--139.

\bibitem{KS:10}
Jeong-Ah Kim and Dong-Uy Shin, \emph{Generalized {Y}oung walls and crystal
  bases for quantum affine algebra of type {$A$}}, Proc. Amer. Math. Soc.
  \textbf{138} (2010), no.~11, 3877--3889.

\bibitem{Nak:03}
Hiraku Nakajima, \emph{{$t$}-analogs of {$q$}-characters of quantum affine
  algebras of type {$A_n,D_n$}}, Combinatorial and geometric representation
  theory ({S}eoul, 2001), Contemp. Math., vol. 325, Amer. Math. Soc.,
  Providence, RI, 2003, pp.~141--160.

\bibitem{OEIS}
The On-Line~Encyclopedia of~Integer~Sequences, 2017, Published electronically
  at \url{http://oeis.org/A051274}.

\end{thebibliography}
\bibliographystyle{amsplain}

\end{document}